\documentclass{amsart}
\usepackage[latin1]{inputenc}
\usepackage{amssymb}
\usepackage{amsmath}
\usepackage{amscd}
\usepackage{latexsym}
\usepackage{amsfonts}
\usepackage{verbatim}
\usepackage{enumerate}
\usepackage{amsthm}
\usepackage[alphabetic]{amsrefs}

\newtheorem{theorem}{Theorem}
\newtheorem{cor}{Corollary}
\newtheorem{lemma}{Lemma}
\newtheorem{prop}{Proposition}

\newcommand\ric{\operatorname{Ric}}
\numberwithin{equation}{section}

\begin{document}
\title[Connected sum construction of constant Q-curvature manifolds]{Connected sum construction of constant Q-curvature manifolds in higher dimensions}
\author{Yueh-Ju LIN }
\address{Department of Mathematics, University of Notre Dame, Notre Dame, IN 46556, USA}
\email{ylin4@nd.edu}
\date{}

\begin{abstract}
For a compact Riemannian manifold $(M, g_2)$ with constant $Q$-curvature of dimension $n\geq 6$ satisfying nondegeneracy condition, we show that one can construct many examples of constant $Q$-curvature manifolds by gluing construction. We provide a general procedure of gluing together $(M,g_2)$ with any compact manifold $(N, g_1)$ satisfying a geometric assumption. In particular, we can prove that there exists a metric with constant $Q$-curvature on the connected sum $N\#M$.
\end{abstract}
\maketitle

\section{Introduction}\label{intro}
In conformal geometry, the Yamabe problem asks whether there exists a conformal metric with constant scalar curvature.
Let $M$ be a compact manifold of dimension $n\geq 3$ and $R_{g}$ be the scalar curvature of the metric ${g}$. If $\tilde{g}= \psi^{\frac{4}{n-2}}g$ is conformal to ${g}$, where $\psi$ is a smooth positive function, then $R_{\tilde{g}}$ and $R_{g}$ are related by the following nonlinear PDE:
\begin{equation}\label{eq1}
-\frac{4(n-1)}{(n-2)}\Delta_{g}\psi + R_{g}\psi = R_{\tilde{g}}\psi^{\frac{n+2}{n-2}},
\end{equation}
where $\mathcal{L}_{g}=  -\frac{4(n-1)}{(n-2)}\Delta_{g} + R_{g} $ is called the conformal Laplacian operator and
\begin{equation}\label{eqC}
\mathcal{L}_{\tilde{g}}(u) = \psi^{-\frac{n+2}{n-2}}\mathcal{L}_{g}(u\psi).
\end{equation}
Thus, by equation (\ref{eq1}), the Yamabe problem is equivalent to solving
\begin{equation}
\mathcal{L}_{g}(\psi)= \mu\psi^{\frac{n+2}{n-2}}
\end{equation}
for some constant $\mu$.
The problem was solved through a sequence of results: Yamabe \cite{Yam60}, Trudinger \cite{Tru68}, Aubin \cite{Aub76} and Schoen \cite{Sch84}. More details are in the survey paper written by Lee and Parker \cite{LP87}.

In geometric analysis, gluing constructions are well-known methods to create new solutions to nonlinear PDEs from existing ones. Let $(M',g')$ and $(M'', g'')$ be compact manifolds of dimension $n\geq 3$. Suppose $g'$ and $g''$ have constant scalar curvature (not necessarily the same value). Joyce \cite {Joy03} wrote down explicit $1$-parameter family of metrics $g_{t}$ on the connected sum $M=M'\#M''$ whose scalar curvature is close to constant, and showed that it can be perturbed by a small conformal change to a constant scalar curvature metric. Before Joyce published his thesis work, Mazzeo, Pollack and Uhlenbeck \cite {MPU95} also obtained new constant scalar curvature metrics by gluing possibly noncompact manifolds of constant positive scalar curvature satisfying a certain nondegeneracy condition.

For equation \eqref{eq1}, we have an analogue of this relation in four dimensions. The roles of scalar curvature and Laplacian operator are replaced by $Q$-curvature and Paneitz operator $P_{g}$, which is a fourth-order operator introduced by Paneitz \cite{Pan08}. The problem of prescribing constant $Q$-curvature is equivalent to solving a fourth order nonlinear PDE. The difficulty comes from the fact that in general, the corresponding Euler functional is not bounded from above or below. Djadli and Malchiodi \cite{DM08} managed to prove the existence of conformal metrics with constant $Q$-curvature under geometric assumptions by min-max theory. The Paneitz operator and its relation with $Q$-curvature in higher dimensions were written down by Branson \cite{Bra87} and Paneitz \cite{Pan08}.

As in four dimensions, we can also ask the same question as above in higher dimensions. Qing and Raske \cite{QR06} answered this question for compact locally conformally flat manifold of dimension $n\geq 5$ with positive scalar curvature, positive constant $Q$-curvature and its Poincar{\'e} exponent less than $\frac{n-4}{2}$.  They also proved the set of these metrics is compact in $C^{\infty}$ topology. In addition, in a special case when Paneitz operator is defined by $\tilde{P}^n_{g}u= \Delta^2_{g}u+\alpha\Delta_{g}u$, where $n\geq 5$ and $\alpha>0$ is a constant, Djadli, Hebey and Ledoux \cite{DHL00} studied the best Sobolev constants and investigated the optimal inequalities. Their result can be applied to obtain a solution of a fourth-order partial differential equation of critical growth. Aside from these somewhat specialized cases, at this time there are no general existence results for constant $Q$-curvature metrics.  

In general, the difficulty of solving higher order differential equations is the lack of a maximum principle. In this paper, we use a gluing method to construct many examples of constant $Q$-curvature metrics for more general compact manifolds of dimension $n\geq 6$ under a nondegeneracy assumption. Our main results are
\begin{theorem}\label{Thm1}
Suppose the dimension $n\geq 6$. Given a compact manifold $(M, g_2)$ of dimension $n$ with constant $Q$-curvature $\nu$. Let $(N, g_1)$ be a compact manifold of dimension $n$. Assume \\
(i) $(M, g_2)$ satisfies the nondegeneracy condition below. \\
(ii) The Paneitz operator $P^{n}_{g_1}$ has positive Green's function $G(x,y)$ on $N$.\\
 Then the connected sum $\widetilde{M}=M\#N$ admits a smooth metric $\tilde{g}$ with constant $Q$- curvature $\nu$.
\end{theorem}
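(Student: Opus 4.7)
The plan is to follow the standard gluing strategy: first construct a one-parameter family $g_t$ of approximate metrics on $\widetilde{M}=M\#N$ whose $Q$-curvature is close to the constant $\nu$ for small $t>0$, then perturb $g_t$ conformally to achieve exactly constant $Q$-curvature $\nu$ via a fixed-point argument. Writing $\tilde g = \psi^{4/(n-4)}g_t$, the conformal transformation law for the Paneitz operator reduces the requirement $Q_{\tilde g}\equiv \nu$ to the fourth-order nonlinear PDE $P^n_{g_t}(\psi) = \tfrac{n-4}{2}\,\nu\,\psi^{(n+4)/(n-4)}$ for a positive function $\psi$.

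For the approximate family, I would fix points $p\in N$ and $q\in M$. On the $N$-side I would use the positive Green's function $G(\cdot,p)$ of $P^{n}_{g_1}$ to form the conformal metric $g_1' = G(\cdot,p)^{4/(n-4)}g_1$ on $N\setminus\{p\}$, which has an asymptotically flat end at $p$ with controlled decay of $Q$-curvature. On the $M$-side I would work in conformal normal coordinates centered at $q$ (or perform a Green's-function conformal change when $\nu\ne 0$), so that a punctured neighborhood of $q$ is well approximated by a flat end. Excising balls of radius $\sim t$ around $p$ and $q$, rescaling, and matching the two ends through a short cylindrical neck with smooth cutoffs produces a smooth family $g_t$ on $\widetilde{M}$. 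A direct computation then shows $Q_{g_t}-\nu$ is concentrated in the neck and is of order $t^\sigma$ for some $\sigma>0$ in a suitable weighted norm.

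For the perturbation, write $\psi=1+\eta$ and linearize about $\psi=1$ to obtain
\[
\mathcal{L}_t\,\eta \;=\; -(Q_{g_t}-\nu) + \mathcal{N}_t(\eta),
\]
where $\mathcal{L}_t$ is essentially $P^{n}_{g_t}-\tfrac{n+4}{2}\nu$ and $\mathcal{N}_t$ collects the quadratic and higher-order terms in $\eta$. The natural functional setting is weighted Hölder spaces $C^{4,\alpha}_\delta$ adapted to the degenerating neck, with weight $\delta$ chosen in the Fredholm range of the Euclidean model $\Delta^2$ and away from its indicial roots. The core of the argument is a uniform bound $\|\mathcal{L}_t^{-1}\|\le C$ as $t\to 0$: the nondegeneracy of $(M,g_2)$ gives invertibility of the linearization on the $M$-piece, the positivity of the Green's function of $P^n_{g_1}$ gives solvability on the $N$-piece, and patching these two inverses with cutoffs supported away from the neck produces a parametrix whose error is small in operator norm for small $t$, which is then corrected by a Neumann series.

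Given this uniform linear control, the fixed-point step is routine: since $Q_{g_t}-\nu = O(t^\sigma)$ in the weighted norm and $\mathcal{N}_t$ is quadratic in $\eta$, the map $\eta \mapsto \mathcal{L}_t^{-1}\bigl(-(Q_{g_t}-\nu)+\mathcal{N}_t(\eta)\bigr)$ is a contraction on a small ball for $t$ sufficiently small, yielding a unique small $\eta_t$. Elliptic regularity for the Paneitz operator upgrades $\eta_t$ to a smooth positive function, and $\tilde g = (1+\eta_t)^{4/(n-4)}g_t$ is the desired smooth metric of constant $Q$-curvature $\nu$. I expect the main obstacle to be the uniform invertibility of $\mathcal{L}_t$: the collapsing neck generates small eigenvalues that must be excluded through a careful choice of weight $\delta$ and through the separation of the $M$- and $N$-contributions, and the hypothesis $n\ge 6$ is likely required precisely so that the natural weight exponents avoid the indicial roots of $\Delta^2$ on the Euclidean cone and the Green's function $G(\cdot,p)$ has the right decay to serve as a conformal factor.
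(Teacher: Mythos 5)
Your proposal follows essentially the same route as the paper: build an asymptotically flat end on the $N$-side by conformally changing $g_1$ with the Paneitz Green's function $G_p^{4/(n-4)}$ (so that $Q_{g_N}\equiv 0$ on $N\setminus\{p\}$), glue a scaled-down copy onto $(M,g_2)$ with cutoffs, prove uniform invertibility of the linearized operator $L_{g_{a,b}}=P^n_{g_{a,b}}-\tfrac{n+4}{2}Q^n_{g_{a,b}}$ on weighted H\"older spaces by patching the inverses $H_1$, $H_2$ from each piece (Propositions~\ref{p4} and~\ref{p5}) and absorbing the commutator errors with a Neumann series, then close with a contraction mapping.

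One aside in your sketch is worth flagging, not because it breaks the argument but because it is an unnecessary and slightly misleading detour: you suggest additionally conformally changing the $M$-side near $q$ (via conformal normal coordinates or a Green's-function change) to get a flat end there too. The paper deliberately does not do this. The metric $g_2$ already has constant $Q$-curvature $\nu$, and the whole point of the construction is to keep $g_{a,b}=g_2$ for $|u|\geq 4b$ so that the $Q$-curvature defect $Q^n_{g_{a,b}}-\nu$ is supported only in the neck annulus and on the scaled-down $N$-end, giving the key smallness estimate $\|Q^n_{g_{a,b}}-\nu\|_{C^{0,\alpha}_{\delta-4}}=O(b^{2-\delta})$. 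A conformal change centered at $q$ would in general destroy the constancy of $Q$ on all of $M$ and spread the error globally, which would undermine the fixed-point step. The asymptotic flatness needed for the gluing is obtained on the $N$-side alone; on the $M$-side one only needs the Euclidean normal-coordinate expansion $(g_2)_{ij}=\delta_{ij}+O(|u|^2)$. Also, the role of $n\geq 6$ in the paper is tied to the Green's function expansion $G_p=|x|^{4-n}+O''''(|x|^{6-n})$ (Proposition~\ref{p3}); for $n=5$ the expansion degrades to $O(|x|^{4-n+\epsilon})$ and the parameter relation $a=b^4$ must be replaced, which is why $n=5$ is handled only in Remark 4 rather than in the main theorem.
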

\noindent
\textbf {Nondegeneracy condition}\\
Given a compact manifold $(M,g)$ of dimension $n$, the linearized operator
\begin{equation}\label{lo}
L_{g}:=P^{n}_{g}-\frac{n+4}{2}Q^{n}_{g} : W^{4,2}(M) \rightarrow L^2(M)
\end{equation}
is invertible.\\

In addition to our main theorem, we state the following three propositions which provide us many examples for our gluing problem. We will provide proofs of propositions and examples in Section \ref{ex1} and \ref{ex2}.
\begin{prop}\label{p1}
Given a compact manifold $(M ,g_2)$ of dimension $n\geq 6$ with positive scalar curvature $R_{g_2}>0$ and negative constant $Q$-curvature $Q^{n}_{g_2}<0$, then the assumption (i) in Theorem~\ref{Thm1} is satisfied.
\end{prop}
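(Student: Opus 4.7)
Since $P^n_{g_2}$ is a formally self-adjoint elliptic operator of order $4$ and $Q^n_{g_2}=\nu$ is constant, the linearized operator $L_{g_2}=P^n_{g_2}-\tfrac{n+4}{2}\nu$ is self-adjoint and Fredholm of index zero as a map $W^{4,2}(M)\to L^2(M)$. The plan is therefore to reduce invertibility to triviality of the kernel and establish the coercivity estimate $\langle L_{g_2}u,u\rangle>0$ for every nonzero $u\in W^{4,2}(M)$, which will force $\ker L_{g_2}=\{0\}$ and hence invertibility.

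To derive a usable quadratic form, I would integrate by parts in the standard expression
$$P^n_{g_2}u=\Delta^2 u-\operatorname{div}\bigl((a_n R_{g_2}\, g_2 - b_n \operatorname{Ric}_{g_2})\nabla u\bigr)+\tfrac{n-4}{2}\nu\, u,$$
with the canonical constants $a_n=\tfrac{n^2-4n+8}{2(n-1)(n-2)}$ and $b_n=\tfrac{4}{n-2}$. Combining this with the Bochner identity $\int_M(\Delta u)^2=\int_M|\nabla^2 u|^2+\int_M\operatorname{Ric}(\nabla u,\nabla u)$ and the decomposition $\operatorname{Ric}_{g_2}=(n-2)A_{g_2}+\tfrac{R_{g_2}}{2(n-1)}g_2$ in terms of the Schouten tensor $A_{g_2}$, the arithmetic collapses the coefficient of $\int R_{g_2}|\nabla u|^2$ to exactly $1/2$ and yields the clean identity
$$\langle L_{g_2}u,u\rangle=\int_M|\nabla^2 u|^2+(n-6)\int_M A_{g_2}(\nabla u,\nabla u)+\tfrac12\int_M R_{g_2}|\nabla u|^2-4\nu\int_M u^2.$$
The critical step here is the cancellation $\tfrac{n-4}{2}\nu-\tfrac{n+4}{2}\nu=-4\nu$, which converts the sign-indefinite zeroth-order term of $P^n_{g_2}$ into a strictly positive mass term $-4\nu\int_M u^2$ thanks to $\nu<0$.

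In the borderline dimension $n=6$ the Schouten term disappears outright, so the identity immediately gives $\langle L_{g_2}u,u\rangle\geq -4\nu\int_M u^2>0$ for every $u\not\equiv 0$ under $R_{g_2}>0$ and $\nu<0$, completing the argument. For $n>6$ the coefficient $n-6$ in front of the Schouten integral is strictly positive, and this is where the main obstacle lies: $A_{g_2}$ is not pointwise sign-controlled by the hypotheses $R_{g_2}>0$, $\nu<0$ alone. I would attack this by a further integration by parts using the contracted second Bianchi identity $\nabla^i(A_{g_2})_{ij}=\nabla_j(R_{g_2}/(2(n-1)))$ to rewrite $\int_M A_{g_2}(\nabla u,\nabla u)$ as a combination of $\int_M \Delta R_{g_2}\, u^2$ and $\int_M u\langle A_{g_2},\nabla^2 u\rangle$; after Cauchy--Schwarz the latter can be absorbed into $\tfrac12\int_M|\nabla^2 u|^2$ and a multiple of $\int_M|A_{g_2}|^2 u^2$, and the $\Delta R_{g_2}$ term is controlled by combining the constant $Q$-curvature formula (which expresses $\Delta R_{g_2}$ in terms of $R_{g_2}^2$, $|\operatorname{Ric}_{g_2}|^2$, and $\nu$) with the positivity of $R_{g_2}$, closing the coercivity estimate and yielding invertibility of $L_{g_2}$ for all $n\geq 6$.
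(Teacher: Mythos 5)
Your opening is correct and mirrors the paper: reduce invertibility of the self-adjoint Fredholm operator $L_{g_2}$ to strict positivity of the quadratic form $\langle L_{g_2}u,u\rangle$, obtained by integrating the divergence term by parts and using the Bochner identity. The algebraic identity you derive,
$$\langle L_{g_2}u,u\rangle=\int_M|\nabla^2 u|^2+(n-6)\int_M A_{g_2}(\nabla u,\nabla u)+\tfrac12\int_M R_{g_2}|\nabla u|^2-4\nu\int_M u^2,$$
is a correct rearrangement, and your observation that $\tfrac{n-4}{2}\nu-\tfrac{n+4}{2}\nu=-4\nu$ gives a strictly positive mass term under $\nu<0$ matches the paper's key cancellation.

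However, there is a genuine gap for $n>6$. Your form isolates a residual Schouten term $(n-6)\int_M A_{g_2}(\nabla u,\nabla u)$, which has a strictly positive coefficient multiplying a quantity of indeterminate sign, since $R_{g_2}>0$ and $Q^n_{g_2}<0$ give no pointwise control on $A_{g_2}$. The proposed repair via the contracted Bianchi identity, Cauchy--Schwarz, and the constant-$Q$-curvature equation does not close: after integration by parts you would face a term $\int_M |A_{g_2}|^2 u^2$ (from absorbing $\int u\langle A_{g_2},\nabla^2 u\rangle$ into $\tfrac12\int|\nabla^2u|^2$), and neither $|A_{g_2}|^2$ nor $\Delta R_{g_2}$ (which the $Q$-curvature formula expresses in terms of $R_{g_2}^2$ and $|\mathrm{Ric}_{g_2}|^2$, the latter with the wrong sign for your purpose) is controlled by the hypotheses. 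The issue is organizational rather than substantive: you consumed the Bochner identity by substituting $\int(\Delta u)^2=\int|\nabla^2u|^2+\int\mathrm{Ric}(\nabla u,\nabla u)$ into the biharmonic term, which leaves a residual $(1-\tfrac{4}{n-2})\int\mathrm{Ric}(\nabla u,\nabla u)$. The paper instead substitutes $\int\mathrm{Ric}(\nabla u,\nabla u)=\int(\Delta u)^2-\int|\nabla^2u|^2$ directly into the $-\tfrac{4}{n-2}\int\mathrm{Ric}(\nabla u,\nabla u)$ term, landing on
$$\langle L_{g_2}u,u\rangle=\int_M\Bigl(1-\tfrac{4}{n-2}\Bigr)(\Delta u)^2+\tfrac{4}{n-2}\int_M|\nabla^2u|^2+a_n\int_M R_{g_2}|\nabla u|^2-4\nu\int_M u^2,$$
in which every summand is manifestly nonnegative for $n\geq 6$ (the first coefficient is $\tfrac{n-6}{n-2}\geq 0$) and the last is strictly positive for $u\not\equiv 0$. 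Equivalently, applying Bochner once more to your $(1-\tfrac{4}{n-2})\int\mathrm{Ric}(\nabla u,\nabla u)$ term recovers exactly this, so the fix is a one-line modification; but as written, your argument only completes the case $n=6$.
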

\begin{prop}\label{p2}
Let $(N,g_1)$ be a compact positive Einstein manifold of dimension $n\geq 6$. Then the Paneitz operator $P^{n}_{g_1}$ has positive Green's function.
\end{prop}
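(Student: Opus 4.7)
On an Einstein manifold the Paneitz operator collapses to a constant-coefficient polynomial in $-\Delta_{g_1}$, and consequently factors as a product of two shifted Laplacians. The positivity of the Green's function then follows by applying the maximum principle to each factor and combining the two resulting Green's functions.

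First I would use $\operatorname{Ric}_{g_1} = \tfrac{R_{g_1}}{n} g_1$ and the constancy of $R_{g_1}$ to reduce every curvature term in the standard formula for $P^n_{g_1}$ to a multiple of the identity; in particular $Q^n_{g_1}$ becomes a fixed multiple of $R_{g_1}^2$. A direct computation then gives the factorization
\begin{equation*}
P^n_{g_1} \;=\; \bigl(-\Delta_{g_1} + \alpha_n R_{g_1}\bigr)\bigl(-\Delta_{g_1} + \beta_n R_{g_1}\bigr),
\end{equation*}
where $\alpha_n, \beta_n$ depend only on $n$, and where the hypothesis $n \geq 6$ is precisely what forces both $\alpha_n, \beta_n > 0$. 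Since $(N, g_1)$ is positive Einstein, $R_{g_1} > 0$, so each factor has the form $L_i := -\Delta_{g_1} + c_i$ with $c_i > 0$.

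Next I would handle each factor separately. Each $L_i$ is a self-adjoint second-order elliptic operator whose zeroth-order term is a positive constant, so its lowest eigenvalue is bounded below by $c_i > 0$; in particular $L_i$ is invertible, and the strong maximum principle yields a strictly positive Green's function $G_{L_i}(x,y)$ on $N \times N \setminus \operatorname{diag}$. Because the two factors commute (both are polynomials in $-\Delta_{g_1}$), the Green's function of $P^n_{g_1}$ is recovered by composition:
\begin{equation*}
G(x,y) \;=\; \int_N G_{L_1}(x,z) \, G_{L_2}(z,y) \, dV_{g_1}(z).
\end{equation*}
Being an integral of a product of positive kernels over a compact domain, $G$ is strictly positive, as required.

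\textbf{Main obstacle.} The crux is the factorization together with the sign check: one must verify that the quadratic polynomial in $-\Delta_{g_1}$ coming from $P^n_{g_1}$ on an Einstein manifold has two real roots, both of which produce strictly positive zeroth-order terms when $R_{g_1} > 0$ and $n \geq 6$. This is essentially a bookkeeping calculation from the explicit formulas for $P^n_g$ and $Q^n_g$, but it is exactly the point at which the dimensional restriction is used. Once the factorization is in place, the remaining maximum-principle and composition-of-Green's-functions steps are standard.
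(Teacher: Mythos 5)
Your approach is correct and shares the crucial observation with the paper's proof, namely that on an Einstein manifold $P^n_{g_1}$ factors as a product of two second-order operators of the form $-\Delta_{g_1} + c$ with $c > 0$ (after normalizing $\ric_{g_1} = (n-1)g_1$, the paper obtains the two constants $\tfrac{1}{4}n^2 - \tfrac{n}{2} - 2 = \tfrac{(n-4)(n+2)}{4}$ and $\tfrac{1}{4}n^2 - \tfrac{n}{2} = \tfrac{n(n-2)}{4}$). Where you differ is in how positivity of the Green's function is extracted from the factorization. The paper works directly with the Green's function $G_p$ of the fourth-order operator and applies the maximum principle twice: first to the auxiliary function $F = [-\Delta_{g_1} + \tfrac{n(n-2)}{4}]G_p$, which is annihilated by the other factor away from $p$ and blows up to $+\infty$ near $p$ (so $F>0$), and then to the equation $[-\Delta_{g_1}+\tfrac{n(n-2)}{4}]G_p = F > 0$ at an interior minimum of $G_p$. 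You instead observe that each second-order factor, being a Schr\"odinger operator with strictly positive constant potential on a compact manifold, has its own strictly positive Green's function, and then recover $G$ for $P^n_{g_1}$ as the composition $\int_N G_{L_1}(x,z)G_{L_2}(z,y)\,dV(z)$, which is manifestly positive. This composition-of-kernels route is a legitimate and arguably cleaner alternative; what the paper's route buys is that it never needs to invoke the existence and positivity of the two intermediate Green's functions separately, nor the convolution identity, but only the maximum principle and the leading-order asymptotics of $G_p$ at the singularity. One small inaccuracy in your write-up: you say $n\geq 6$ is \emph{precisely} what forces both zeroth-order constants to be positive, but $\tfrac{(n-4)(n+2)}{4}$ and $\tfrac{n(n-2)}{4}$ are in fact both positive already for $n \geq 5$; the restriction $n\geq 6$ in the proposition comes from elsewhere in the paper, not from the sign of these two roots.
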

\begin{prop}\label{p3}
Let $(N,g_1)$ be a compact manifold satisfying the assumption (ii) of Theorem \ref{Thm1}. Then the Green's function of Paneitz operator $P^{n}_{g_1}$ has asymptotic expansion $G(p, x)= |x|^{4-n} + O''''(|x|^{6-n})$ as $|x|\rightarrow 0$ if dimension $n\geq 7$ and $G(p, x) = |x|^{4-n} + O''''(|x|^{6-\epsilon-n})$ for any small $\epsilon >0$ as $|x|\rightarrow 0$ if dimension $n=6$, where $\{x^{i}\}$ is a normal coordinate around the point $p$ on $N$.
\end{prop}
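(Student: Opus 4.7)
The plan is to build $G(p,\cdot)$ near $p$ out of the Euclidean biharmonic fundamental solution $|x|^{4-n}$ and to control the remainder by interior regularity for the fourth-order operator $P^{n}_{g_1}$. Throughout I work in $g_1$-normal coordinates $\{x^i\}$ centered at $p$, so that $g_{ij} = \delta_{ij}+O(|x|^2)$ and $\Gamma^{k}_{ij}=O(|x|)$.

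The first step is to compute $P^{n}_{g_1}(|x|^{4-n})$ in these coordinates, using the standard expression
\[
P^{n}_{g}u = \Delta_{g}^{2} u - \operatorname{div}_{g}\!\bigl((a_n R_{g}\,g + b_n\,\ric_{g})\cdot\nabla u\bigr) + \tfrac{n-4}{2}Q^{n}_{g}u.
\]
Since $|x|^{4-n}$ is biharmonic on $\mathbb{R}^{n}\setminus\{0\}$, each away-from-$p$ contribution to $\Delta_{g_1}^{2}(|x|^{4-n})$ comes from the $O(|x|^2)$ discrepancy between $g_{ij}$ and $\delta_{ij}$ paired with fourth derivatives of size $|x|^{-n}$, yielding $O(|x|^{2-n})$. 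The divergence term contributes $R_{g_1}\Delta_{g_1}u$ of size $O(|x|^{2-n})$ plus derivatives of curvature against $\nabla u$ of size $O(|x|^{4-n})$, and the $Q$-term is $O(|x|^{4-n})$. Summing and including the distributional part at the pole gives
\[
P^{n}_{g_1}\bigl(|x|^{4-n}\bigr) = \gamma_n\,\delta_p + F(x), \qquad |\partial^{j} F(x)|\le C|x|^{2-n-j}\ (0\le j\le 4),
\]
for some $\gamma_n>0$; away from $p$, $F$ is smooth and bounded.

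Next I would normalize $G(p,x) = |x|^{4-n} + h(x)$, so that $h$ solves $P^{n}_{g_1} h = -F$ (modulo a smooth compactly supported term from the global cutoff of $|x|^{4-n}$) distributionally on $N$. Because $P^{n}_{g_1}$ is invertible modulo its kernel and a positive Green's function exists by hypothesis (ii), $h$ is determined up to a smooth piece. The local behavior of $h$ near $p$ is then extracted from the Euclidean parametrix identity
\[
h(x) = -\gamma_n^{-1}\int_{U} |x-y|^{4-n}\,F(y)\,dy + (\text{smooth remainder}),
\]
where $U$ is a small coordinate neighborhood of $p$. Substituting $|F(y)|\le C|y|^{2-n}$ and rescaling $y = |x|z$ formally produces $|\partial^{k} h(x)|\le C|x|^{6-n-k}$ for $0\le k\le 4$, provided the resulting dimensionless integrals converge.

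The main obstacle is precisely the convergence of those integrals, and it is here that $n=6$ becomes special. For $n\ge 7$, the rescaled integrand $|e_1-z|^{4-n-k}|z|^{2-n}$ decays fast enough at infinity and is integrable near each singular point, so all integrals converge and the clean $O''''(|x|^{6-n})$ bound follows. When $n=6$, the integrand decays only like $|z|^{-6}$ in the far region $|z|\gg 1$ (with analogous endpoint issues at the interior singularity when $k=4$), producing a logarithmic divergence in $\log(1/|x|)$. To recover, I would replace the crude pointwise estimate by weighted $L^{p}$ Calder{\'o}n--Zygmund bounds for $P^{n}_{g_1}$ with $p$ slightly above $1$, or equivalently by $C^{0,\alpha}$ Schauder estimates with $\alpha<1$; Sobolev embedding then converts the logarithm into $|x|^{-\epsilon}$ for any small $\epsilon>0$, yielding the $O''''(|x|^{6-\epsilon-n})$ bound. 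Verifying that the coefficient of $|x|^{4-n}$ is indeed $1$ with this normalization of $G$, and bootstrapping through all four derivatives, is then standard bookkeeping once the Paneitz operator has been expanded in normal coordinates.
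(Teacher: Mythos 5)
Your approach is genuinely different from the paper's. The paper computes the defect $\eta = P^{n}_{g_1}(|x|^{4-n}) \in C^{0,\alpha}_{2-n,g_1}(N_p)$ exactly as you do, but then disposes of it using the weighted-H\"older Fredholm machinery already set up in Proposition~\ref{p4}: since the flat bilaplacian has no indicial roots in $(4-n,0)$ and $6-n$ lies strictly inside that interval when $n\geq 7$, the operator $P^{n}_{g_1}\colon C^{4,\alpha}_{6-n,g_1}(N_p)\to C^{0,\alpha}_{2-n,g_1}(N_p)$ is invertible, and the unique solution $\phi$ of $P^{n}_{g_1}\phi = \eta$ is precisely the $O''''(|x|^{6-n})$ remainder; for $n=6$ the endpoint $6-n=0$ is an indicial root, so the weight must be shifted to $-\epsilon$, which produces the stated $O''''(|x|^{-\epsilon})$. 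You instead try to extract the decay rate from an explicit convolution against the Euclidean biharmonic kernel followed by scaling. This is a reasonable classical alternative, but as written it has two gaps that the paper's abstract route avoids. First, the identity $h(x) = -\gamma_n^{-1}\int_{U}|x-y|^{4-n}F(y)\,dy + (\text{smooth remainder})$ is not a consequence of $P^{n}_{g_1}h = -F$: you are replacing the variable-coefficient operator $P^{n}_{g_1}$ by $\Delta^2$ when inverting, and the resulting error $(P^{n}_{g_1}-\Delta^2)h$ must be fed back through a Neumann-series bootstrap before you can claim that the omitted piece is of lower order (let alone smooth). Second, for $k=3,4$ the kernels $\partial_x^k|x-y|^{4-n}$ fail to be absolutely integrable, so the claimed bounds on $\partial^3 h,\ \partial^4 h$ require a genuine Calder\'on--Zygmund argument that you only gesture at. (By contrast, the appeal to weighted $L^p$ or Schauder estimates just to handle the $n=6$ logarithm is overkill: the elementary bound $\log(1/|x|)\le C_\epsilon |x|^{-\epsilon}$ already absorbs it.) Your plan is salvageable, but filling these two gaps would essentially reproduce the weighted elliptic theory the paper invokes directly.
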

\noindent
Combining Propositions \ref{p1}, \ref{p2}, \ref{p3} and Theorem \ref{Thm1}, we immediately have the following corollary.

\begin{cor}\label{Cor1}
Suppose the dimension $n\geq 6$. Assume $(M, g_2)$ has negative constant $Q$-curvature $\nu$ and positive scalar curvature. Let $(N, g_1)$ be a compact positive Einstein manifold. Then there exists a metric with negative constant $Q$-curvature  $\nu$ on the connected sum $\widetilde{M}=N\#M$.
\end{cor}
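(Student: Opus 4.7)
The plan is to treat Corollary 1 as a direct packaging of the three preceding propositions and the main theorem, so the proof proposal consists of checking that the data $(M,g_2)$ and $(N,g_1)$ satisfy assumptions (i) and (ii) of Theorem \ref{Thm1} and then invoking the theorem.

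First I would verify the nondegeneracy condition (i). Since $(M,g_2)$ is a compact $n$-manifold with $n\geq 6$ having positive scalar curvature $R_{g_2}>0$ and negative constant $Q$-curvature $Q^n_{g_2}=\nu<0$, Proposition \ref{p1} applies verbatim and yields that the linearized operator
\[
L_{g_2}=P^n_{g_2}-\tfrac{n+4}{2}Q^n_{g_2}\colon W^{4,2}(M)\to L^2(M)
\]
is invertible. This is exactly hypothesis (i) of Theorem \ref{Thm1}. Heuristically one should expect no obstacle here, since the sign $Q^n_{g_2}<0$ makes the zeroth-order term of $L_{g_2}$ manifestly positive, and Proposition \ref{p1} already encapsulates this sign argument.

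Next I would verify hypothesis (ii), the positivity of the Green's function of $P^n_{g_1}$. Here $(N,g_1)$ is assumed to be a compact positive Einstein manifold of dimension $n\geq 6$, so Proposition \ref{p2} directly supplies a positive Green's function $G(x,y)$ for $P^n_{g_1}$.

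With both hypotheses of Theorem \ref{Thm1} in hand, I would simply apply the theorem to the pair $(M,g_2)$, $(N,g_1)$. The theorem produces a smooth metric $\tilde g$ on $\widetilde M=M\#N=N\#M$ with constant $Q$-curvature equal to $\nu$, which is negative by assumption. This is precisely the statement of the corollary. I do not anticipate any genuine obstacle in this step: all of the analytic difficulty has been absorbed into Theorem \ref{Thm1} and Propositions \ref{p1}--\ref{p2}, so the corollary is purely a matter of assembling the hypotheses. (Proposition \ref{p3} is not used directly in the corollary; it enters as a technical ingredient inside the proof of Theorem \ref{Thm1}, providing the asymptotics of $G$ needed for the gluing.)
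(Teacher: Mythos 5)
Your proposal matches the paper's intent exactly: Proposition \ref{p1} supplies hypothesis (i), Proposition \ref{p2} supplies hypothesis (ii), and Theorem \ref{Thm1} then gives the constant $Q$-curvature metric on the connected sum. Your side remark about Proposition \ref{p3} is also accurate — it is invoked inside the proof of Theorem \ref{Thm1} (for the Green's function asymptotics in the approximate-metric construction) rather than as a standalone hypothesis check for the corollary.
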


\noindent
\textbf {Remark 1.}
We can provide many examples of manifolds satisfying the conditions in Corollary \ref{Cor1}. For example, let P be a compact positive Einstein manifold, N be a compact negative Einstein manifold, and F be a compact flat manifold; then the manifold
\begin{equation}
M_2^{n}=\underset{k-copies}{\underbrace{P\times P\times\cdots\times P}}\times\underset{k-copies}{\underbrace{N\times N\times\cdots\times N}}\times\underset{l-copies}{\underbrace{F\times F\times\cdots\times F}}.
\end{equation}
equipped with the product metric has negative constant $Q$-curvature and positive scalar curvature. Thus,  by Corollary \ref{Cor1}, given any compact positive Einstein manifold $M^{n}_1$, there exists a metric with negative constant $Q$-curvature on the connected sum $\widetilde{M}=M^{n}_1\#M^{n}_2$. Proofs of Proposition \ref{p1}, \ref{p2}, \ref{p3} and more details about the examples will be provided in Section \ref{ex1} and \ref{ex2}.\\

\noindent
\textbf {Remark 2.}
When the dimension $n=5$, we do not have a general theorem to control the order of the lower order term in the expansion of the Green's function of Paneitz operator. There are some technical modification we need to make in order to prove Theorem \ref{Thm1} in this case. More discussion about the details will be left in the end of the paper. \\

The outline of this paper is the following: In Section \ref{pre}, we first review some background materials. In Section \ref{app metric}, we define the approximate metric on the connected sum. We show the invertibility of the linearized operator on each manifold in Section \ref{lineach}. In Section \ref{ex1} and \ref{ex2}, we provide examples satisfying assumptions $(i)$ and $(ii)$ respectively. In Section \ref{main}, we prove the invertibility of the linearized operator on the connected sum. After that, we proceed to prove the main theorem using an implicit function theorem argument.
\section{Gluing Construction}\label{gluing}
We will first review some related concepts. Then we can define the approximate metric and discuss the property of linearized operator on each manifold or on the connected sum. In the end, we provide examples for our gluing problem.
\subsection{\bf Background material.}\label{pre}
When dimension $n=2$, if $\tilde{g}=e^{2\psi}g$ is a conformal metric to $g$, then
\begin{equation}
\Delta_{\tilde{g}}u = e^{-2\psi}\Delta_{g}u
\end{equation}
and
\begin{equation}
-\Delta_{g}\psi + \frac{1}{2}R_{g}\psi = \frac{1}{2}R_{\tilde{g}}e^{2\psi},
\end{equation}
where $R_{g}$ is the scalar curvature of metric $g$.

When $n=4$, the Paneitz operator $P^4_{g}$ introduced by Paneitz \cite{Pan08} is a fourth-order differential operator defined by
\begin{equation}
P^4_{g}\phi = \Delta^2_{g}\phi - div_{g}(\frac{2}{3}R_{g}g-2\ric_{g})d\phi,
\end{equation}
where $\ric_{g}$ is the Ricci curvature of $g$. We have
\begin{equation}
P_{\tilde{g}}^4u = e^{-4\psi}P_{g}^4u
\end{equation}
if $\tilde{g}=e^{2\psi}g$. In particular,
\begin{equation}
P_{g}^4\psi + Q^4_{g} = Q_{\tilde{g}}^4e^{4\psi},
\end{equation}
where $Q_{g}^4=\frac{1}{6}(-\Delta_{g}R_{g} + R_{g}^2- 3|\ric_{g}|^2)$ is the $Q$-curvature.
The Paneitz operator and its relation with $Q$-curvature in higher dimensions were written down by Branson \cite{Bra87} and Paneitz \cite{Pan08}. Let (M,g) be a compact manifold of dimension $n\geq 5$, $P_{g}^{n}$ is defined by
\begin{equation}\label{eqP}
P^{n}_{g}u= (-\Delta)^2u + div_{g}(4A_{g} - (n-2)\sigma_1(g^{-1}A_{g})g)du + \frac{n-4}{2}Q_{g}^{n}u,
\end{equation}
where $A_g = \frac{1}{n-2}(\ric_{g}- \frac{1}{2(n-1)}R_{g}g)$ is the Schouten tensor and $\sigma_{i}(g^{-1}A_{g})$ denote the i-th elementary symmetric function of the eigenvalues of $(1,1)$-tensor $g^{-1}A_{g}$ and
\begin{equation}\label{dfQ}
Q^{n}_{g}= -\Delta_{g}\sigma_1(g^{-1}A_{g}) + \frac{n-4}{2}\sigma_1^2(g^{-1}A_{g}) + 4\sigma_2(g^{-1}A_{g})
\end{equation}
is the $Q$-curvature defined for $n\geq 5$. We then have
\begin{equation}\label{eq2}
P_{\tilde{g}}^{n}u = \psi^{-\frac{n+4}{n-4}}P^{n}_{g}(u\psi)
\end{equation}
and
\begin{equation}\label{eqQ}
P_{g}^{n}\psi = \frac{n-4}{2}Q^{n}_{\tilde{g}}\psi^{\frac{n+4}{n-4}}
\end{equation}
if $\tilde{g}=\psi^{\frac{4}{n-4}}g$ is a conformal metric to {g}, for some smooth, positive function $\psi$.
Equations (\ref{eq2}) and (\ref{eqQ}) are higher order analogue of equations (\ref{eqC}) and (\ref{eq1}) respectively, where the roles of $P^{n}_{g}$ and $Q^{n}_{g}$ are replaced by conformal Laplacian $\mathcal{L}_{g}$ and scalar curvature $R_{g}$ respectively.

\subsection{\bf Approximate metric on the connected sum.}\label{app metric}
To construct the approximate metric, first we obtain an asymptotically flat manifold $(N, g_{N})$ out of $(N, g_1)$. Then we would like to scale down $(N, g_{N})$ and glue it onto $(M, g_2)$.

Given a compact manifold $(N, g_1)$ of dimension $n\geq 6$ satisfying the assumption (ii) in Theorem \ref{Thm1}. By Proposition \ref{p3}, we can assume that the Green's function $G(x,y)$ has expansion $G_{p}(x):=G(p,x) = |x|^{4-n} + O''''(|x|^{5-n})$ as $|x|\rightarrow 0$, where $\{x^{i}\}$ is a normal coordinate system around the point $p$ on $N$. With such normal coordinate $\{x^{i}\}$, we have $(g_1)_{ij} = \delta_{ij}- \frac{1}{3}R_{ikjl}(p)x^{k}x^{l}+ O'''(|x|^3)_{ij}$ as $|x| \rightarrow 0$. Let $z^{i}=\frac{x^{i}}{|x|^2}$ denote the inverted normal coordinates near $p$ and let $\Phi(z) = \frac{z}{|z|^2}= x$ denote the inversion. Define $g_{N}=G^{\frac{4}{n-4}}_{p}g_1$. Then we can write down the metric $g_{N}$ with respect to above coordinates in the complement of a large ball.
\begin{align}\label{ASF}
g_{N}&=\Phi^{*}(G^{\frac{4}{n-4}}_{p}g_1)\\
&\notag
= (G_{p}\circ\Phi)^{\frac{4}{n-4}}\Phi^{*}(\{\delta_{ij}- \frac{1}{3}R_{ikjl}(p)x^{k}x^{l}+ O'''(|x|^3)_{ij}\}dx^{i}dx^{j}) \\
&\notag
= (|z|^{n-4}+ O(|z|^{n-5}))^{\frac{4}{n-4}}\{\delta_{ij}- \frac{1}{3}R_{ikjl}(p)\frac{z^{k}z^{l}}{|z|^4}+ O'''(|z|^{-3})_{ij}\}\\
&\notag
*\frac{1}{|z|^2}(\delta_{ip}-\frac{2}{|z|^2}z^{i}z^{p})dz^{p}\frac{1}{|z|^2}(\delta_{jq}-\frac{2}{|z|^2}z^{j}z^{q})dz^{q}\\
&\notag
=|z|^4(1+ O(|z|^{-1}))^{\frac{4}{n-4}}\{\delta_{ij}- \frac{1}{3}R_{ikjl}(p)\frac{z^{k}z^{l}}{|z|^4}+ O'''(|z|^{-3})_{ij}\}\\
&\notag
*\frac{1}{|z|^4}(\delta_{ip}-\frac{2}{|z|^2}z^{i}z^{p})(\delta_{jq}-\frac{2}{|z|^2}z^{j}z^{q})dz^{p}dz^{q}.
\end{align}
Simplify the expansion in equation (\ref{ASF}), we have
\begin{align}
(g_{N})_{pq}(z) &= \delta_{pq} +\frac{C}{|z|}\delta_{pq}-\frac{1}{3}R_{pkql}(p)\frac{z^{k}z^{l}}{|z|^4}+\frac{4}{3}R_{pkjl}(p)\frac{z^{k}z^{l}z^{j}z^{q}}{|z|^6}-\frac{4}{3}R_{ikjl}(p)\frac{z^{k}z^{l}}{|z|^8}z^{i}z^{j}z^{p}z^{q} \\
&\notag
+ O(|z|^{-3})
\end{align}
as $|z|\rightarrow\infty$. This expansion implies that $g_{N}$ is asymptotically flat. Moreover, by equation \eqref{eqQ}, $Q^{n}_{g_{N}} \equiv0$ on $N\setminus\{p\}$.

Let $(M, g_2)$ be a compact manifold with constant $Q$-curvature $\nu$ satisfying the nondegeneracy condition in Theorem \ref{Thm1}. We can choose a normal coordinate system $\{u^{i}\}$ near a point $q$ such that $(g_2)_{ij} = \delta_{ij}- \frac{1}{3}R_{ikjl}(q)u^{k}u^{l}+ O'''(|u|^3)_{ij}$ as $|u| \rightarrow 0$.

Let $a$, $b > 0$ be two small parameters. Combining information from above paragraphs, we have
\begin{equation}
g_{N}= |dz|^2 + \eta_1(z) \text{ for } |z|\geq 1,
\end{equation}
where $$\eta_1(z) = (\eta_1)_{ij}(z) dz^{i}dz^{j}, (\eta_1)_{ij}(z) = O(|z|^{-1})$$
and
\begin{equation}
g_2 = |du|^2 + \eta_2(u)  \text{ for } |u|\leq1,
\end{equation}
where $$\eta_2(u)=(\eta_2)_{ij}(u) du^{i}du^{j}, (\eta_2)_{ij}(u) = O(|u|^2).$$
Topologically, the connected sum $\widetilde{M}=N\# M$ is the result of removing two small balls $B^{n}$ from each manifold centered at $p$ and $q$ respectively, and joining them along their boundaries $S^{n-1}$. We identify an annular region $\{a^{-1}\leq|z|\leq4a^{-1}\}$ in $N$ with a similar annular region $\{b\leq|u|\leq4b\}$ in $M$  by letting $u=abz$. In this annular region, we then have
\begin{align}
a^2b^2g_{N}&=a^2b^2|dz|^2 + a^2b^2(\eta_1)_{ij}(z) dz^{i}dz^{j}\\
&\notag
= |du|^2 + \tilde{\eta_1}_{ij}(u) du^{i}du^{j},
\end{align}
where $\tilde{\eta_1}_{ij}(u) = (\eta_1)_{ij}(a^{-1}b^{-1}u)$.

Geometrically, we would like to glue $a^2b^2g_{N}$ to $g_2$. To do this, pick a cut-off function $\theta_1(t)$, where $0\leq\theta_1\leq1$, with $\theta_1 = 1$ for $t\leq1$ and $\theta_1 = 0$ for $t\geq4$. Set $\theta_2(t) = 1 -\theta_1(t)$. Now, we define the approximate metric $g_{a,b}$ on the connected sum $\widetilde{M}=N\# M$ as the following:
\begin{equation}
g_{a,b}= \begin{cases}g_2 & |u|\geq4b \\ |du|^2 + [\theta_1(\frac{|u|}{b})\tilde{\eta_1}(u) + \theta_2(\frac{|u|}{b})\eta_2(u)] & b\leq|u|\leq4b \\ a^2b^2g_{N} & |z|\leq a^{-1} \end{cases}.
\end{equation}

\subsection{Linearized operator on each weighted space.}\label{lineach}
With the approximate metric $g_{a,b}$ defined in the previous section, we would like to conformally perturb it to a metric $\tilde{g}$ with constant $Q$-curvature $\nu$. That is, to find a conformal metric $\tilde{g}=(1+\phi)^{\frac{4}{n-4}}g_{a,b}$ such that $Q^{n}_{\tilde{g}}= \nu$. If we define the nonlinear map
\begin{equation}
\mathbf{N}_{g}: C^{4,\alpha}(M) \rightarrow C^{0,\alpha}(M)
\end{equation}
by
\begin{equation}\label{eqN1}
\mathbf{N}_{g}[u] := u^{-\frac{n+4}{n-4}}P^{n}_{g}[u]-\frac{n-4}{2}\nu,
\end{equation}
we then convert the geometric problem into solving the following equation:
\begin{equation}\label{eqN}
\mathbf{N}_{g_{a,b}}[1+\phi]= (1+\phi)^{-\frac{n+4}{n-4}}P^{n}_{g_{a,b}}[1+\phi]-\frac{n-4}{2}\nu=0,
\end{equation}
where $\phi$ is small in an appropriate weighted H\"older norm to be defined later. We can also compute its linearized operator.
\begin{equation}
L_{g}[\phi] := \frac{d}{ds}\mathbf N_{g}(1 + s\phi)|_{s=0}=P^{n}_{g}[\phi] - \frac{n+4}{2}Q^{n}_{g}\phi
\end{equation}
\noindent
\textbf {Remark 3.}
The domain of $\mathbf{N}_{g}[u]$ is not the entire space. It is the subset of  $C^{4,\alpha}(M)$ such that $u>0$.\\

The key of solving equation (\ref{eqN}) is proving the linearized operator $L_{g_{a,b}}$ is invertible on the connected sum $\widetilde{M}=N\#M$. To achieve that, we need to show that the linearized operator is invertible on each manifold acting on an appropriate weighted H\"{o}lder space. Denote $N_{p}:=N\setminus\{p\}$ and $M_{q}:= M\setminus\{q\}$. First, we have to define the weight functions $\rho_1$ smoothly on $N_{p}$. Recall that $\{z^{i}\}$ is an inverted normal coordinate near $p$ on $(N_{p}, g_{N})$. Define
\begin{equation}
\rho_1 = |z| \text{ for } |z|\geq2, \rho_1=1 \text{ for }|z|\leq\frac{1}{2}
\end{equation}
and $\rho_1\geq1$ elsewhere. For $\delta\in\mathbb{R}$,
\begin{equation}
||f||_{C^0_{\delta,g_{N}}(N_{p})}\equiv||\rho^{-\delta}_1f||_{C^0(N_{p})}= \sup_{z\in N_{p}}|\rho^{-\delta}_1(z)f(z)|.
\end{equation}
For $0<\alpha<1$, define the semi-norm
\begin{equation}
|f|_{C^{0,\alpha}_{\delta,g_{N}}(N_{p})} \equiv \sup_{z\in N_{p}}\left(\rho^{-\delta+\alpha}_1(z) \sup_{0<4d_{g_{N}}(z,\zeta)\leq\rho_1(z)}\frac{|f(z)-f(\zeta)|}{d_{g_{N}}(z,\zeta)^{\alpha}}\right).
\end{equation}
Finally, we can define the weighted H\"older norm
\begin{equation}
||f||_{C^{k,\alpha}_{\delta,g_{N}}(N_{p})}\equiv \sum_{i=0}^{k}||\nabla^{i}f||_{C^0_{\delta-i,g_{N}}(N_{p})} + |\nabla^{k}f|_{C^{0,\alpha}_{\delta-k,g_{N}}(N_{p})}.
\end{equation}
Furthermore, we would like to define weighted H\"older space $C^{k,\alpha}_{\delta, g_1}(N_{p})$ respect to the background metric $g_1$. We define the weight function $\rho$ on $N_{p}$ as
\begin{equation}
\rho = |x| \text{ for } |x|\leq\frac{1}{2}, \rho=1 \text{ for }|x|\geq2
\end{equation}
and $\rho\geq\frac{1}{2}$ elsewhere, where $\{x^{i}\}$ is a normal coordinate around the point $p$. We then define the weighted H\"older space $C^{k,\alpha}_{\delta, g_1}(N_{p})$ in the same manner as $C^{k,\alpha}_{\delta,g_{N}}(N_{p})$ with the weight function replaced by $\rho$. Note that since $|x|= |z|^{-1}$ near the point $p$, we know $\rho= \rho_1^{-1}$.

Now we can view the nonlinear map in (\ref{eqN}) as a map from
\begin{equation}
\mathbf{N}_{g}[1+\phi]: C^{4,\alpha}_{\delta}\rightarrow C^{0,\alpha}_{\delta-4}
\end{equation}
We then have the following invertibility results.

\begin{prop}\label{p4}
Suppose the compact manifold $(N, g_1)$ satisfies the assumption (ii) of Theorem \ref{Thm1}. Then $L_{g_{N}}: C^{4, \alpha}_{\delta,g_{N}}(N_{p}) \longrightarrow C^{0, \alpha}_{\delta - 4,g_{N}}(N_{p}) $ is invertible if $4-n<\delta<0$ and $n\geq 6$.
\end{prop}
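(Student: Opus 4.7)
The plan is to transfer the problem from $(N_p, g_N)$ to $(N, g_1)$ via the Green's-function conformal change. Since $g_N = G_p^{4/(n-4)}g_1$, equation \eqref{eqQ} forces $Q^n_{g_N} \equiv 0$ on $N_p$, so $L_{g_N} = P^n_{g_N}$ there; the transformation law \eqref{eq2} then gives
\begin{equation*}
P^n_{g_N}\phi \;=\; G_p^{-(n+4)/(n-4)}\, P^n_{g_1}(G_p\phi).
\end{equation*}
Thus, setting $\psi := G_p\, \phi$, the equation $L_{g_N}\phi = f$ is equivalent to $P^n_{g_1}\psi = G_p^{(n+4)/(n-4)} f$ on $N_p$.

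Next I would verify that the weighted spaces transform cleanly under this conjugation. Using $\rho = \rho_1^{-1}$ near $p$ together with the expansion $G_p \sim |x|^{4-n} = \rho^{4-n}$ from Proposition \ref{p3}, multiplication by $G_p$ furnishes a Banach isomorphism $C^{4,\alpha}_{\delta, g_N}(N_p) \to C^{4,\alpha}_{4-n-\delta, g_1}(N_p)$, and multiplication by $G_p^{(n+4)/(n-4)}$ identifies $C^{0,\alpha}_{\delta-4, g_N}(N_p)$ with $C^{0,\alpha}_{-n-\delta, g_1}(N_p)$. Writing $\delta' := 4-n-\delta$, the range $\delta \in (4-n, 0)$ corresponds to $\delta' \in (4-n, 0)$, and the proposition reduces to invertibility of
\begin{equation*}
P^n_{g_1}: C^{4,\alpha}_{\delta', g_1}(N_p) \longrightarrow C^{0,\alpha}_{\delta'-4, g_1}(N_p)
\end{equation*}
for each $\delta' \in (4-n, 0)$.

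For this I would invoke weighted Fredholm theory for elliptic operators on a manifold with an isolated conical singularity at $p$. The relevant indicial roots come from the radial biLaplacian model $\Delta^2 r^{\delta'}$ and form the set $\{2-n,\, 4-n,\, 0,\, 2\}$; for $n \geq 6$, the interval $(4-n, 0)$ lies strictly between two consecutive roots, so $P^n_{g_1}$ is Fredholm between the stated weighted spaces. A duality argument exchanging $\delta'$ and $4-n-\delta'$ (both in the same Fredholm interval) shows the index is zero, reducing invertibility to triviality of the kernel. If $\psi \in C^{4,\alpha}_{\delta', g_1}(N_p)$ satisfies $P^n_{g_1}\psi = 0$ classically on $N_p$ with $\delta' > 4-n$, then $\psi$ decays more slowly than the fundamental solution at $p$, so a removable-singularity argument (pairing with smooth test functions and using that no $\delta$-mass is produced at $p$) extends $\psi$ smoothly across $p$. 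Hypothesis (ii) implies $P^n_{g_1}$ is injective on $C^{4,\alpha}(N)$, forcing $\psi \equiv 0$; unwinding the conformal conjugation finishes the proof.

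The main obstacle is the weighted Fredholm package itself: identifying the indicial roots for the fourth-order operator $P^n_{g_1}$ and constructing a parametrix near $p$ via the biLaplacian model with error small in the weighted Hölder topology. The subsequent removable-singularity step, while standard in spirit, is delicate for a fourth-order equation and depends on the precise expansion $G(p,x) = |x|^{4-n} + O''''(|x|^{6-n})$ from Proposition \ref{p3}; this is also where the dimension threshold $n \geq 6$ becomes visible, with an $\epsilon$-loss when $n = 6$.
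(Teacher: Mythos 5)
Your proposal matches the paper's proof at every essential point: conjugate $P^n_{g_N}$ to $P^n_{g_1}$ via the Green's-function conformal factor, translate the weighted H\"older spaces on $(N_p, g_N)$ to weighted H\"older spaces on $(N_p, g_1)$ (shifting the weight by the order $|x|^{4-n}$ of $G_p$), invoke Fredholm theory using that $(4-n,0)$ is free of indicial roots for the bi-Laplacian, and close the argument with a removable-singularity step together with invertibility of $P^n_{g_1}$ on the compact $N$ coming from hypothesis (ii). The only differences are cosmetic -- the paper argues surjectivity (via triviality of the adjoint kernel) and injectivity of $L_{g_N}$ as two separate passes rather than as ``index zero plus trivial kernel,'' and it writes out the sup- and H\"older-seminorm checks for multiplication by $G_p^{(n+4)/(n-4)}$ that you take as given; also note the minor slip that a kernel element of weight $\delta'\in(4-n,0)$ \emph{blows up less strongly} than $|x|^{4-n}$ rather than ``decays more slowly,'' though this does not affect the removable-singularity conclusion.
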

\begin{proof}
We first prove the surjectivity of $L_{g_{N}}$. By equation (\ref{eqQ}), $g_{N}$ has zero $Q$-curvature. We have
\begin{equation}
L_{g_{N}}[u]=P^{n}_{g_{N}}[u]-\frac{n+4}{2}Q^{n}_{g_{N}}u=P^{n}_{g_{N}}[u].
\end{equation}
Thus, we would like to solve
\begin{equation}\label{eqP1}
P^{n}_{g_{N}}[u]=f , \ \forall f\in C^{0, \alpha}_{\delta - 4, g_{N}}(N_{p}).
\end{equation}
Since the weight function for the space $C^{k, \alpha}_{4-\delta, g_1}(N_{p})$ with respect to the metric $g_1$ is $\rho=\rho^{-1}_1$, which is the inverse of the weight function of the space $C^{k, \alpha}_{4-\delta,g_{N}}(N_{p})$, we observe that
\begin{equation}
C^{k, \alpha}_{\delta,g_{N}}(N_{p}) = C^{k, \alpha}_{-\delta, g_1}(N_{p}).
\end{equation}
Recall $g_{N}= G^{\frac{4}{n-4}}_{p}g_1$. By the conformal invariant property of the Panetiz operator, the equation (\ref{eqP1}) is equivalent to
\begin{equation}\label{eqP2}
P^{n}_{g_1}[G_{p}u]=fG_{p}^{\frac{n+4}{n-4}}, \ \forall f\in C^{0, \alpha}_{4-\delta, g_1}(N_{p}).
\end{equation}
With the assumption that $P^{n}_{g_1}$ has positive Green's function $G(x,y)$, by Proposition \ref{p3}, we can assume that $G(x,y)$ has the expansion $G_{p}(x)=|x|^{4-n}+O''''(|x|^{5-n})$  as $|x|\rightarrow 0$. Then, we would like to claim that $fG^{\frac{n+4}{n-4}}_{p}\in C^{0, \alpha}_{-n-\delta, g_1}(N_{p})$. First, we check the $C^{0}_{-n-\delta,g_1}(N_{p})$ norm. After that, we will estimate the H\"older part of the weighted norm. First, we have
\begin{equation}
|fG_{p}^{\frac{n+4}{n-4}}| \leq C(|x|^{4-\delta})(|x|^{-n-4}+O''''(|x|^{-n-3})) = C(|x|^{-\delta-n}+O''''(|x|^{-\delta-n+1}))
\end{equation}
as $|x|\rightarrow 0$. Thus, it's clear that
\begin{equation}
||fG_{p}^{\frac{n+4}{n-4}}||_{C^{0}_{-n-\delta,g_1}(N_{p})}\leq C
\end{equation}
for some constant $C$. Now we proceed to show that
\begin{equation}\label{eqH}
|fG_{p}^{\frac{n+4}{n-4}}|_{C^{0,\alpha}_{-n-\delta,g_1}(N_{p})}\leq C.
\end{equation}
To prove equation (\ref{eqH}), we have to prove
\begin{equation}\label{eqH1}
\sup_{x\in N_{p}}\left(\rho^{n+\delta+\alpha}(x) \sup_{0<4d_{g_1}(x,y)\leq\rho(x)}\frac{|f(x)G_{p}^{\frac{n+4}{n-4}}(x)- f(y)G_{p}^{\frac{n+4}{n-4}}(y)|}{d_{g_1}(x,y)^{\alpha}}\right) \leq C.
\end{equation}
By the fact that Riemannian distance $d_{g_1}(x,y)$ is comparable to the Euclidean metric $|x-y|$ in a small neighborhood of the point $p$ and the condition of $0<4d_{g_1}(x,y)\leq\rho(x)$, we deduce that
\begin{equation}\label{eqb}
\frac{|x|}{|y|}\sim1\ as\ |x|, |y| \rightarrow 0.
\end{equation}
That is, $|x|$ and $|y|$ are comparable as $|x|$, $|y|\rightarrow 0$. Thus, we have
\begin{align}\label{eqH0}
&\rho^{n+\delta+\alpha}(x) \sup_{0<4d_{g_1}(x,y)\leq\rho(x)}\frac{|f(x)G_{p}^{\frac{n+4}{n-4}}(x)- f(y)G_{p}^{\frac{n+4}{n-4}}(y)|}{d_{g_1}(x,y)^{\alpha}}\\
&\notag
\leq \rho^{n+\delta+\alpha}\sup_{0<4d_{g_1}(x,y)\leq\rho(x)}\left(\frac{|f(x)- f(y)||G_{p}^{\frac{n+4}{n-4}}(x)|}{d_{g_1}(x,y)^{\alpha}} +\frac{|f(y)||G_{p}^{\frac{n+4}{n-4}}(x)-G_{p}^{\frac{n+4}{n-4}}(y)|}{d_{g_1}(x,y)^{\alpha}}\right)\\
&\notag
\leq C \left(\rho^{\delta-4+\alpha}(x) \sup_{0<4d_{g_1}(x,y)\leq\rho(x)}\frac{|f(x)- f(y)|}{d_{g_1}(x,y)^{\alpha}}\right)(\rho^{n+4}|G_{p}^{\frac{n+4}{n-4}}(x)| )\\
&\notag
+ C(\rho^{\delta-4}|f(y)|)\left(\rho^{n+4+\alpha}(x) \sup_{0<4d_{g_1}(x,y)\leq\rho(x)}\frac{|G_{p}^{\frac{n+4}{n-4}}(x)-G_{p}^{\frac{n+4}{n-4}}(y)|}{d_{g_1}(x,y)^{\alpha}}\right).
\end{align}
Since $f\in C^{0, \alpha}_{4-\delta, g_1}(N_{p})$ and $G_{p}^{\frac{n+4}{n-4}}(x) = |x|^{-n-4} + O''''(|x|^{-n-3})$ as $|x|\rightarrow 0$, the first term on the right hand side of the last inequality in equation (\ref{eqH0}) is bounded. Thus, from equations (\ref{eqb}) and (\ref{eqH0}), to prove (\ref{eqH1}), it suffices to prove
\begin{equation}\label{eqb1}
\sup_{x\in N_{p}}\left(\rho^{n+4+\alpha}(x) \sup_{0<4d_{g_1}(x,y)\leq\rho(x)}\frac{|G_{p}^{\frac{n+4}{n-4}}(x)- G_{p}^{\frac{n+4}{n-4}}(y)|}{d_{g_1}(x,y)^{\alpha}}\right) \leq C.
\end{equation}
We then perform the following calculations:
\begin{align}\label{esH}
\frac{\big||x|^{-4-n}-|y|^{-4-n}\big|}{d_{g_1}(x,y)^{\alpha}}&\leq C\frac{\big|\frac{1}{|x|}- \frac{1}{|y|} \big|}{|x-y|^{\alpha}}\left(\frac{1}{|x|^{n+3}}+ \frac{1}{|x|^{n+2}|y|}+... +\frac{1}{|y|^{n+3}}\right)\\
&\notag
= C\frac{||x| - |y||}{|x||y||x-y|^{\alpha}}\left(\frac{1}{|x|^{n+3}}+ \frac{1}{|x|^{n+2}|y|}+... +\frac{1}{|y|^{n+3}}\right)\\
&\notag
\leq C\frac{|x - y|^{1-\alpha}}{|x||y|}\left(\frac{1}{|x|^{n+3}}+ \frac{1}{|x|^{n+2}|y|}+... +\frac{1}{|y|^{n+3}}\right)\\
&\notag
\leq C\frac{|x|^{1-\alpha}}{|x||y|}\left(\frac{1}{|x|^{n+3}}+ \frac{1}{|x|^{n+2}|y|}+... +\frac{1}{|y|^{n+3}}\right)\\
&\notag
\leq C\frac{|x|^{-\alpha}}{|y|}\left(\frac{1}{|x|^{n+3}}+ \frac{1}{|x|^{n+2}|y|}+... +\frac{1}{|y|^{n+3}}\right)\\
&\notag
\leq C\frac{|x|^{-\alpha}}{|x|}\left(\frac{1}{|x|^{n+3}}+ \frac{C}{|x|^{n+2}|x|}+... +\frac{C}{|x|^{n+3}}\right)\\
&\notag
\leq C|x|^{-n-4-\alpha}
\end{align}
The second to last inequality holds by equation (\ref{eqb}). With the estimate in equation (\ref{esH}), we basically proved equation (\ref{eqb1}), which implies that
\begin{equation}
fG^{\frac{n+4}{n-4}}_{p}\in C^{0, \alpha}_{-n-\delta, g_1}(N_{p}).
\end{equation}
Therefore, solving the equation (\ref{eqP1}) is equivalent to solving
\begin{equation}\label{eqP3}
P^{n}_{g_1}[\phi] = \tilde{f}, \ \forall \tilde{f}\in C^{0, \alpha}_{-n-\delta, g_1}(N_{p}),
\end{equation}
where $\phi\in C^{4, \alpha}_{-n-\delta + 4, g_1}(N_{p})$. Once we solve equation (\ref{eqP3}), this implies that $u=\frac{\phi}{G_{p}}$ is a solution of equation (\ref{eqP2}).

We have converted the problem into proving the surjectivity of the operator
\begin{equation}
P^{n}_{g_1} : C^{4, \alpha}_{-n-\delta + 4, g_1}(N_{p}) \longrightarrow C^{0, \alpha}_{-n-\delta, g_1}(N_{p}).
\end{equation}
Since $4-n<-n-\delta+4<0$ and there is no indicial root of bilaplacian operator in this range, the linearized operator $P^{n}_{g_1}$ is Fredholm. Note that the Paneitz operator is self-adjoint. Thus, to prove the surjectivity, we need to show the adjoint operator $(P^{n}_{g_1})^{*}=P^{n}_{g_1}:C^{4, \alpha}_{\delta, g_1}(N_{p}) \longrightarrow C^{0, \alpha}_{\delta -4, g_1}(N_{p})$ is injective. Suppose $\eta\in \text{ker} (P_{g_1}^{n})^{*}$. Since $4-n<\delta$ and there is no indicial root of bilaplacian operator in $(4-n, 0)$, $\eta$ has to be in $C^{4, \alpha}_{0, g_1}$, which means $\eta$ has a removable singularity. Since $P^{n}_{g_1}\eta=0$, by standard elliptic regularity theory, we know that $\eta$ is smooth on $N$. We then reduce the problem on the punctured manifold $N\setminus\{p\}$ back to the compact manifold $N$. With the assumption that $P^{n}_{g_1}$ has positive Green's function, we have the invertibility of $P^{n}_{g_1}$ on the compact manifold $N$. Thus, we prove the surjectivity part of this proposition.

The proof of the injectivity is similar to the above argument. Assume $\psi\in \text{ker}P^{n}_{g_{N}}$, where $\psi\in C^{4,\alpha}_{\delta, g_{N}}(N_{p})$, then we have $P^{n}_{g_1}[G_{p}\psi]=0$ by the conformal invariant property mentioned above. It means $G_{p}\psi\in \text{ker}P^{n}_{g_1}$ and $G_{p}\psi\in C^{4,\alpha}_{-n-\delta+4, g_1}(N_{p})$. Since $4-n<\delta<0$, we have $4-n<-n-\delta+4<0$. Again, we know there is no indicial root of bilaplacian operator in $(4-n, 0)$. This implies that $G_{p}\psi\in C^{4, \alpha}_{0, g_1}(N_{p})$, which means $G_{p}\psi$ has a removable singularity. By standard elliptic regularity theory again, we conclude that $G_{p}\psi$ is smooth on $N$. By the invertibility of $P^{n}_{g_1}$ on $N$, we get $G_{p}\psi\equiv 0$, which implies $\psi\equiv 0$.
\end{proof}
We define the weighted H\"older space on $M_{q}$ similarly. First, define weight function $\rho_2$ as
\begin{equation}
\rho_2 = |u| \text{ for }|u|\leq\frac{1}{2}, \rho_2 = 1\text{ for }|u|\geq2
\end{equation}
and $\rho_2\geq\frac{1}{2}$ for $|u|\geq\frac{1}{2}$. For $\delta\in\mathbb{R}$, one can also define
\begin{equation}
||f||_{C^0_{\delta,g_2}(M_{q})}\equiv||\rho^{-\delta}_2f||_{C^0(M_{q})}= \sup_{u\in M_{q}}|\rho^{-\delta}_2(u)f(u)|.
\end{equation}
For $0<\alpha<1$, define the semi-norm
\begin{equation}
|f|_{C^{0,\alpha}_{\delta,g_2}(M_{q})} \equiv \sup_{u\in M_{q}}\left(\rho^{-\delta+\alpha}_2(u) \sup_{0<4d_{g_2}(u,v)\leq\rho_2(u)}\frac{|f(u)-f(v)|}{d_{g_2}(u,v)^{\alpha}}\right).
\end{equation}
Finally, we can define the norm
\begin{equation}
||f||_{C^{k,\alpha}_{\delta,g_2}(M_{q})}\equiv \sum_{i=0}^{k}||\nabla^{i}f||_{C^0_{\delta-i,g_2}(M_{q})} + |\nabla^{k}f|_{C^{0,\alpha}_{\delta-k,g_2}(M_{q})}.
\end{equation}
We also have the invertibility result for the manifold $(M, g_2)$.

\begin{prop}\label{p5}
Suppose the compact manifold $(M, g_2)$ satisfies the nondegeneracy condition in Theorem \ref{Thm1}. Then the linearized operator $L_{g_2}: C^{4, \alpha}_{\delta, g_2}(M_{q})\longrightarrow C^{0,\alpha}_{\delta-4, g_2}(M_{q})$ is invertible if $4-n<\delta<0$ and $n\geq6$.
\end{prop}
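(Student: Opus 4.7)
The plan is to mirror the injectivity portion of the proof of Proposition \ref{p4}, but with one substantial simplification: because $g_2$ is already the metric on the compact manifold $M$, there is no need for a conformal change via a Green's function. The nondegeneracy hypothesis (i) plays the role that invertibility of $P^n_{g_1}$ on $N$ played in Proposition \ref{p4}.

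First I would establish that $L_{g_2}:C^{4,\alpha}_{\delta,g_2}(M_q)\to C^{0,\alpha}_{\delta-4,g_2}(M_q)$ is Fredholm. In normal coordinates $\{u^i\}$ around $q$ we have $(g_2)_{ij}=\delta_{ij}+O(|u|^2)$, so near $q$ the principal part of $L_{g_2}$ is the Euclidean bilaplacian $\Delta^2$ up to lower-order perturbations with bounded coefficients. The indicial roots of $\Delta^2$ on a punctured ball are $\sigma\in\{0,2,4-n,2-n\}$, and the hypotheses $4-n<\delta<0$ and $n\ge 6$ place $\delta$ strictly between consecutive indicial roots. The standard analysis of elliptic operators on manifolds with an isolated conical point then yields a Fredholm operator on these weighted H\"older spaces.

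Next, since $P^n_{g_2}$ is formally self-adjoint and $Q^n_{g_2}$ is a scalar, $L_{g_2}$ is formally self-adjoint with respect to the $L^2(dV_{g_2})$ pairing. Under this pairing the dual weight to $\delta-4$ is $-n-(\delta-4)=4-n-\delta$, and $4-n<\delta<0$ forces $4-n<4-n-\delta<0$ as well, so both weights lie in the same indicial-root-free interval. By the Fredholm alternative, surjectivity of $L_{g_2}$ at weight $\delta$ reduces to injectivity of $L_{g_2}$ on $C^{4,\alpha}_{\delta',g_2}(M_q)$ for some $\delta'\in(4-n,0)$; it is therefore enough to prove injectivity throughout $(4-n,0)$.

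Finally, assume $\psi\in C^{4,\alpha}_{\delta,g_2}(M_q)$ solves $L_{g_2}\psi=0$ for some $\delta\in(4-n,0)$. Because the interval $(4-n,0)$ contains no indicial root, a standard bootstrap argument improves the decay of $\psi$ up to weight $0$, so $\psi\in C^{4,\alpha}_{0,g_2}(M_q)$, i.e.\ $\psi$ is bounded near $q$. For the fourth-order elliptic equation $L_{g_2}\psi=0$ in a punctured ball of dimension $n\ge 6$, a bounded isolated singularity is removable (testing against smooth cutoffs and using that a single point has vanishing $W^{2,2}$-capacity in this range), so $\psi$ extends to a $W^{4,2}(M)$ solution on the full compact manifold. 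The nondegeneracy hypothesis (i) that $L_{g_2}:W^{4,2}(M)\to L^2(M)$ is invertible then forces $\psi\equiv 0$, giving injectivity and hence, by the previous step, also surjectivity. I expect the main obstacle to be the Fredholm setup (Step 1): one must know the indicial roots of $\Delta^2$ precisely and verify that the lower-order terms arising from the curvature of $g_2$ do not destroy the Fredholm property on these H\"older-based weighted spaces. The removable-singularity step is routine once the decay has been bootstrapped to weight $0$.
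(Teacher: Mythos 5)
Your proposal follows essentially the same route as the paper: establish Fredholmness on the weighted H\"older scale using that $(4-n,0)$ is free of indicial roots of $\Delta^2$, exploit formal self-adjointness to reduce surjectivity at weight $\delta$ to injectivity of $L_{g_2}$ at the dual weight $4-n-\delta$ (which again lies in $(4-n,0)$), then show any weighted kernel element has decay that can be bootstrapped to weight $0$, giving a removable singularity and a genuine $W^{4,2}$ solution on the closed manifold, to which the nondegeneracy hypothesis applies. The only cosmetic differences are that you spell out the $k=0$ indicial roots and the capacity argument explicitly where the paper simply cites ``standard elliptic regularity theory,'' and you organize the proof as a single injectivity claim over all weights in $(4-n,0)$ rather than treating injectivity and surjectivity in parallel.
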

\begin{proof}
First, $L_{g_2}[u]=P^{n}_{g_2}[u]-\frac{n+4}{2}Q^{n}_{g_2}u$ is a Fredholm operator since $4-n<\delta<0$. Moreover, it is self-adjoint. To prove the surjectivity of $L_{g_2}$, it is equivalent to proving the injectivity of the adjoint operator
\begin{equation}
L^{*}_{g_2}=L_{g_2} : C^{4, \alpha}_{4-\delta-n, g_2}(M_{q})\longrightarrow C^{0,\alpha}_{-\delta-n, g_2}(M_{q})
\end{equation}
Assume $\phi\in \text{ker}L_{g_2}^{*}$, where $\phi\in C^{4,\alpha}_{4-\delta-n, g_2}(M_{q})$. Since $4-n<4-\delta-n<0$ and there is no indicial root in this range, $\phi$ is smooth across the singularity point $q$ on $M$ by the standard elliptic regularity theory as in the proof of Proposition \ref{p4}. With the assumption that $L_{g_2}$ is invertible on the compact manifold $M$, we thus prove the surjectivity part of this proposition.

To prove the injectivity, observe that if $\eta\in \text{ker}L_{g_2}$, where $\eta\in C^{4, \alpha}_{\delta, g_2}(M_{q})$, $\eta$ is smooth on $M$ because of $4-n<\delta<0$. Then by a similar argument as above, we get $\eta\equiv0$.
\end{proof}

\subsection{\bf Manifolds satisfying the assumption (i) of Theorem~\ref{Thm1}}\label{ex1}
Before providing examples, let's first recall the statement of Proposition \ref{p1} and provide the proof.

\newtheorem*{p1}{Proposition~\ref{p1}}
\begin{p1}
Given a compact manifold $(M,g_2)$ of dimension $n\geq 6$ with positive scalar curvature $R_{g_2}>0$ and negative constant $Q$-curvature $Q^{n}_{g_2}<0$, then the assumption (i) in Theorem~\ref{Thm1} is satisfied.
\end{p1}
\begin{proof}
With the Bochner formula
\begin{equation}
\int_{M} \ric_{g_2}(\nabla\phi,\nabla\phi) = \int_{M}(\Delta_{g_2}\phi)^2 -|\nabla^2_{g_2}\phi|^2
\end{equation}
and the linearized operator from equation (\ref{lo}), we have
\begin{align}
&\int_{M} \phi L_{g_2}(\phi) dV_{g_2} \\
&\notag
=\int_{M}(\Delta_{g_2}\phi)^2 + a_{n}R_{g_2}|\nabla\phi|^2 - \frac{4}{n-2}\ric_{g_2}(\nabla\phi, \nabla\phi) - 4Q^{n}_{g_2}\phi^2dV_{g_2}\\
&\notag
=\int_{M}(\Delta_{g_2}\phi)^2 + a_{n}R_{g_2}|\nabla\phi|^2 + \frac{4}{n-2}|\nabla^2_{g_2}\phi|^2 -\frac{4}{n-2}(\Delta_{g_2}\phi)^2 - 4Q^{n}_{g_2}\phi^2 dV_{g_2}\\
&\notag
= \int_{M}(1-\frac{4}{n-2})(\Delta_{g_2}\phi)^2 + a_{n}R_{g_2}|\nabla\phi|^2 + \frac{4}{n-2}|\nabla^2_{g_2}\phi|^2- 4Q^{n}_{g_2}\phi^2dV_{g_2}\\
&\notag
\geq0,
\end{align}
where $a_{n}= \frac{(n-2)^2 + 4}{2(n-1)(n-2)}> 0$. The last inequality holds by the fact that $R_{g_2}>0$, $Q^{n}_{g_2} <0$ and $n\geq6$.
This shows that $L_{g_2}$ is strictly positive which implies $0\not\in \text{spec}(L_{g_2})$. By the facts that $L_{g_2}$ is self-adjoint and $M$ is compact, $L_{g_2}$ is invertible.
\end{proof}
Followed from Proposition \ref{p1}, we immediately obtain the following example satisfying the nondegeneracy condition in Theorem \ref{Thm1}. \\

\noindent
\textbf{Example 1.}\\
Let P be a compact positive Einstein manifold, N be a compact negative Einstein manifold and F be a compact flat manifold. Given a $n$-dimensional manifold
\begin{equation}
M_2^{n}:=\underset{k-copies}{\underbrace{P\times P\times\cdots\times P}}\times\underset{k-copies}{\underbrace{N\times N\times\cdots\times N}}\times\underset{l-copies}{\underbrace{F\times F\times\cdots\times F}}.
\end{equation}
with product metric $g$, we can rescale $(M_2^{n} , g)$ to have diagonal Schouten tensor $g^{-1}\circ A(g)=(a_{ij})$ where $a_{ii}=\frac{1}{2}$ for $i=1 \ldots k$, $a_{jj}= -\frac{1}{2} +\epsilon$ for $j =(k+1) \ldots2k$ and $a_{mm}= 0$ for $m=(2k+1) \ldots n$. By definition, we know
\begin{align}
R_{g}=2(n-1)\sigma_1(g^{-1}\circ A(g)) &= 2(n-1)tr(g^{-1}\circ A(g)) \\
&\notag
= 2(n-1)\left(\frac{k}{2} + k(-\frac{1}{2}+\epsilon)\right) \\
&\notag
=2(n-1)k\epsilon.
\end{align}
Also, recall the definition for $Q^{n}_{g}$ in equation (\ref{dfQ}). We have
\begin{align}
Q^{n}_{g}&= -\Delta_{g}\sigma_1(g^{-1}A_{g}) + \frac{n-4}{2}\sigma_1^2(g^{-1}A_{g}) + 4\sigma_2(A_{g})\\
&\notag
= \frac{n-4}{2}(k\epsilon)^2 + 4\left[\frac{k(k-1)}{2}\frac{1}{4}+ \frac{k(k-1)}{2}(-\frac{1}{2}+\epsilon)^2 + k^2(\frac{1}{2})(-\frac{1}{2}+\epsilon)\right]\\
&\notag
= -k - 2k\epsilon^2 + 2k\epsilon +\frac{n}{2}k^2\epsilon^2.
\end{align}
Thus, for any given value of $k$, we can choose $\epsilon=\epsilon(k,n)>0$ small enough such that $(M_2^{n},g)$ satisfies $R_{g}>0$ and $Q^{n}_{g}<0$. Thus, by Proposition \ref{p1}, $(M_2^{n},g)$ satisfies the assumption (i) of Theorem~\ref{Thm1}.

\subsection{\bf Manifolds satisfying the assumption (ii) of Theorem~\ref{Thm1}}\label{ex2}
By Proposition ~\ref{p2} 
, we can show that any compact, positive Einstein manifold $(N, g_1)$ of dimension $n\geq 6$ satisfies the assumption (ii) of Theorem \ref{Thm1}. We provide the proof in the following.

\newtheorem*{p2}{Proposition~\ref{p2}}
\begin{p2}
Let $(N,g_1)$ be a compact positive Einstein manifold of dimension $n\geq 6$. Then the Paneitz operator $P^{n}_{g_1}$ has positive Green's function.
\end{p2}
\begin{proof}
Without loss of generality, we may assume $\ric_{g_1}= (n-1)g_1$. From the computations in \cite{DHL00}, we know
\begin{equation}
Q_{g_1}^{n} = \frac{n(n^2-4)}{8}
\end{equation}
and
\begin{align}
P_{g_1}^{n}u &= \Delta_{g_1}^2 u - \frac{n^2-2n-4}{2}\Delta_{g_1} u + \frac{n(n-4)(n^2-4)}{16}u\\
&\notag
= \left[- \Delta_{g_1} + (\frac{1}{4}n^2-\frac{n}{2}-2) \right]\left[- \Delta_{g_1} + (\frac{1}{4}n^2-\frac{n}{2}) \right] u.
\end{align}
By direct calculations, we know that $P^{n}_{g_1}$ is strictly positive. Since $P_{g_1}^{n}$ is self-adjoint and $N$ is compact, we conclude that $P^{n}_{g_1}$ is invertible. Thus, by a similar argument in section 11 in Robert's notes \cite{Rob09}, the Green's function of the Paneitz operator $G(x,y)$ exists on $(N,g_1)$.

Fix an arbitrary point $p$ on $N$. Denote $G_{p}(x) = G(p, x)$. We define
\begin{equation}
F(x) := \left[- \Delta_{g_1} + (\frac{1}{4}n^2-\frac{n}{2})\right]G_{p}(x).
\end{equation}
By definition, we know
\begin{equation}
\left[- \Delta_{g_1} + (\frac{1}{4}n^2-\frac{n}{2}-2)\right]F(x) \equiv 0
\end{equation}
on $N\setminus\{p\}$, where $p$ is the singularity point of Green's function. By maximum principle, we know either $F(x) >0$ or $F(x) <0$ on $N\setminus\{p\}$. We know $G_{p}(x)= |x|^{4-n} + o(|x|^{4-n})$ as $|x|\rightarrow 0$, where $\{x^{i}\}$ is a normal coordinate near the singularity point $p$. It turns out that
\begin{equation}
F(x)= \left[- \Delta_{g_1} + (\frac{1}{4}n^2-\frac{n}{2})\right]G_{p}(x) = 2(n-4)|x|^{2-n}+ o(|x|^{2-n}),
\end{equation}
as $|x|\rightarrow 0$. Thus, $F(x)\rightarrow +\infty$ as $|x|\rightarrow 0$, which means that $F(x)>0$ on $N\setminus\{p\}$. Now by maximum principle again, we know $G_{p}(x)$ attains its minimum at an interior point $x_{m}$. Then we have $(\Delta_{g_1}G_{p})(x_{m}) \geq 0$ which implies $(\frac{1}{4}n^2-\frac{n}{2})G_{p}(x_{m}) >0$. That is, $G_{p}(x_{m}) >0$. Thus, we have $G_{p}(x)>0$ on $N\setminus\{p\}$.
\end{proof}

\newtheorem*{p3}{Proposition~\ref{p3}}
\begin{p3}
Let $(N,g_1)$ be a compact manifold satisfying the assumption (ii) of Theorem \ref{Thm1}. Then the Green's function of Paneitz operator $P^{n}_{g_1}$ has asymptotic expansion $G(p, x)= |x|^{4-n} + O''''(|x|^{6-n})$ as $|x|\rightarrow 0$ if dimension $n\geq 7$ and $G(p, x) = |x|^{4-n} + O''''(|x|^{6-\epsilon-n})$ for any small $\epsilon >0$ as $|x|\rightarrow 0$ if dimension $n=6$, where $\{x^{i}\}$ is a normal coordinate around the point $p$ on $N$.
\end{p3}
\begin{proof}
Recall
\begin{equation}
P^{n}_{g}u= (-\Delta)^2u + div_{g}(4A_{g} - (n-2)\sigma_1(g^{-1}A_{g})g)du + \frac{n-4}{2}Q_{g}^{n}u.
\end{equation}
Since $g_1$ is a smooth metric, we have
\begin{equation}
P_{g_1}^{n}u= \Delta_{g_1}^2 u + a_{ij}\nabla^2_{ij}u + b_{i}\nabla_{i}u+ cu,
\end{equation}
where $a_{ij}$, $b_{i}$ and $c$ are all smooth functions on $(N, g_1)$.

We can choose a normal coordinate system $\{x^{i}\}$ near a point $p$ such that $(g_1)_{ij} = \delta_{ij}- \frac{1}{3}R_{ikjl}(p)x^{k}x^{l}+ O'''(|x|^3)_{ij}= \delta_{ij}+ h_{ij}$ as $|x| \rightarrow 0$. We know
\begin{align}
\Delta_{g_1}^2f &= \Delta^2f + h\ast\nabla^4f + \nabla h\ast\nabla^3f + \nabla^2h\ast\nabla^2f + \nabla^3h\ast\nabla f \\
&\notag
+ h\ast\nabla h\ast\nabla^3f + \nabla h\ast\nabla h\ast\nabla^2f + \nabla h\ast\nabla^2h\ast\nabla f + l.o.t.
\end{align}
where $\Delta$ and $\nabla$ are the Laplacian and connection with respect to the Euclidean metric respectively.
Thus we have
\begin{align}
\Delta_{g_1}^2(|x|^{4-n}) &= \Delta^2(|x|^{4-n}) + O(|x|^2)\nabla^4(|x|^{4-n}) + O(|x|)\nabla^3(|x|^{4-n})\\
&\notag
+ O(1)\nabla^2(|x|^{4-n}) + O(1)\nabla(|x|^{4-n}) + O(|x|^3)\nabla^3(|x|^{4-n})\\
&\notag
+ O(|x|^2)\nabla^2(|x|^{4-n}) + O(|x|)\nabla(|x|^{4-n}) + l.o.t. \\
&\notag
= C_1(|x|^{2-n}) + O(|x|^{3-n}).
\end{align}
as $|x|\rightarrow 0$. Thus,
\begin{equation}
P^{n}_{g_1} (|x|^{4-n}) = C_1|x|^{2-n} + O(|x|^{3-n})
\end{equation}
as $|x|\rightarrow 0$. By a similar argument in the proof of Proposition \ref{p4} and a similar calculation as equation (\ref{esH}), we can show that there exists a function $\eta\in C^{0, \alpha}_{2-n, g_1}(N_{p})$ such that $P^{n}_{g_1}(|x|^{4-n}) = \eta$, as $|x|\rightarrow 0$. If $n\geq 7$, $4-n< 6-n < 0$. By the fact that there is no indicial roots in the interval $(4-n, 0)$, we know $P^{n}_{g_1}: C^{4,\alpha}_{6-n, g_1}(N_{p})\rightarrow C^{0,\alpha}_{2-n, g_1}(N_{p})$ is invertible. Thus, there exists a unique function $\phi\in C^{4,\alpha}_{6-n, g_1}(N_{p})$ such that $P^{n}_{g_1}(|x|^{4-n} - \phi)= 0$. It is equivalent to say that to get $P^{n}_{g_1}(G_{p}) = 0$, we should have the expansion $G_{p}(x) = |x|^{4-n} + O''''(|x|^{6-n})$ as $|x|\rightarrow 0$.

If $n=6$, we know $P^{n}_{g_1}: C^{4,\alpha}_{6-\epsilon-n, g_1}(N_{p})\rightarrow C^{0,\alpha}_{2-\epsilon-n, g_1}(N_{p})$ is invertible for any small $\epsilon >0$. Thus, there exists a unique function $\psi\in C^{4,\alpha}_{6-\epsilon-n, g_1}(N_{p})$ such that $P^{n}_{g_1}(|x|^{4-n} - \psi) = 0$. It is equivalent to say that if $n=6$, we have the expansion $G_{p}(x) = |x|^{4-n} + O''''(|x|^{6-\epsilon-n})$ for any small $\epsilon>0$ as $|x|\rightarrow 0$.
\end{proof}


\section{Proof of the main theorem }\label{main}
The core of solving the equation (\ref{eqN}) is to show the linearized operator $L_{g_{a,b}}$ is invertible on the connected sum manifold $\widetilde{M}:=N\#M$. The strategy is to construct an approximate inverse of $L_{g_{a,b}}$ by gluing the inverse operators of $L_{a^2b^2g_{N}}$ and $L_{g_2}$ we obtained from previous sections. Then, we can get the controlled inverse operator of $L_{g_{a,b}}$. In this section, we will first define the weighted H\"older space on $\widetilde{M}$, then we proceed to prove the invertibility of the linearized operator $L_{g_{a,b}}$. Later in Section \ref{IFT}, we show the existence of the solution to equation (\ref{eqN}) using an implicit function theorem argument.

\subsection{Weighted spaces on $\widetilde{M}$}
The weighted spaces that we use on the connected sum $\widetilde{M}:=N\#M$ are glued versions of the weighted spaces defined on  $N_{p}$ and on $M_{q}$. Recall that we have chosen a normal coordinates $\{u^{i}\}$ around $q$ on $M$, defined for $|u| < 1$. To obtain the connected sum manifold, we are gluing a scaled down version of $(N, g_{N})$ to $(M, g_2)$. In terms of the inverted normal coordinate $\{z^{i}\}$ on $N_{p}$, we do the gluing by identifying the annuli $\{b < |u| < 4b\}$ in $M$ and $\{a^{-1} < |z| < 4a^{-1}\}$ in $N$ using the map
$$u=abz.$$
Let
\begin{equation}
w=w^{a,b} = \begin{cases} ab\rho_1(z) & |z|\leq2a^{-1} \\ \rho_2(u) & |u|\geq2b \end{cases}
\end{equation}
be the weight function on $\widetilde{M}$. Note that
$$ ab\leq w\leq 1.$$
Then we define the weighted space on the connected sum $\widetilde{M}=N\#M$ as the following.
For $\delta\in\mathbb{R}$,
\begin{equation}
 ||f||_{C^0_{\delta}(\widetilde{M})}\equiv||w^{-\delta}f||_{C^0(\widetilde{M})}= \sup_{x\in \widetilde{M}}|w^{-\delta}(x)f(x)|.
\end{equation}
For $0<\alpha<1$, define the semi-norm
\begin{equation}
|f|_{C^{0,\alpha}_{\delta}(\widetilde{M})} \equiv \sup_{x\in\widetilde{M}}\left(w^{-\delta+\alpha}(x) \sup_{0<4d_{g_{a,b}}(x,y)\leq w(x)}\frac{|f(x)-f(y)|}{d_{g_{a,b}}(x,y)^{\alpha}}\right).
\end{equation}
Finally, we can define the weighted H\"older norm on $\widetilde{M}$ as
\begin{equation}
||f||_{C^{k,\alpha}_{\delta}(\widetilde{M})}\equiv \sum_{i=0}^{k}||\nabla^{i}f||_{C^0_{\delta-i}(\widetilde{M})} + |\nabla^{k}f|_{C^{0,\alpha}_{\delta-k}(\widetilde{M})}
\end{equation}

\subsection{Analysis of linearized operator.}
The key of proving Theorem \ref{Thm1} is the following theorem which shows the invertibility of the linearized operator $L_{g_{a,b}}$.

\begin{theorem}\label{Thm2}
Suppose $4-n<\delta<0$. There exists a constant $C>0$, independent of $a$ and $b$ such that, if $a=b^4$ and $b$ is small enough, then the linearized operator $L_{g_{a,b}}: C^{4,\alpha}_{\delta}(\widetilde{M}) \longrightarrow C^{0,\alpha}_{\delta-4}(\widetilde{M})$ is invertible and we have a uniform bound $||L_{g_{a,b}}^{-1}||_{C^{4,\alpha}_{\delta}(\widetilde{M})}\leq C$ for its inverse.
\end{theorem}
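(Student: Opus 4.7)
The plan is to build an approximate right inverse $\mathcal{G}$ of $L_{g_{a,b}}$ by patching together $L_{a^2b^2 g_N}^{-1}$ and $L_{g_2}^{-1}$ (obtained from Propositions \ref{p4} and \ref{p5}) via cut-off functions adapted to the connected sum decomposition, and then upgrade $\mathcal{G}$ to a genuine inverse by a Neumann series argument. Choose smooth cut-offs $\chi_N + \chi_M \equiv 1$ on $\widetilde M$, with $\chi_N \equiv 1$ on the pure $N$-piece $\{|z| \leq a^{-1}\}$ and $\chi_M \equiv 1$ on the pure $M$-piece $\{|u| \geq 4b\}$, together with slightly larger cut-offs $\widetilde\chi_N, \widetilde\chi_M$ satisfying $\widetilde\chi_i \chi_i = \chi_i$ whose supports lie inside the regions where $g_{a,b}$ coincides exactly with the corresponding model metric. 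For $f \in C^{0,\alpha}_{\delta-4}(\widetilde M)$, set
\[
\mathcal{G} f := \widetilde\chi_N\, L_{a^2b^2 g_N}^{-1}(\chi_N f) + \widetilde\chi_M\, L_{g_2}^{-1}(\chi_M f).
\]
Here $L_{a^2b^2 g_N}^{-1}$ is obtained from $L_{g_N}^{-1}$ of Proposition \ref{p4} via the homothety $L_{\lambda g}=\lambda^{-2} L_g$ together with the matching rescaling $\rho_1 \mapsto ab\,\rho_1$ of the weight; under this rescaling the operator norm remains uniform in $(a,b)$, and the weight on the $N$-piece agrees with the global weight $w$ on the neck.

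A direct computation gives $L_{g_{a,b}}\mathcal{G} f = f + E f$, and the error $E f$ collects two contributions. First, the commutators $[L_{g_{a,b}}, \widetilde\chi_i]$ acting on the respective building blocks $L_{\ast}^{-1}(\chi_i f)$; these are supported in the transition region inside the neck $\{b \leq |u| \leq 4b\} \cong \{a^{-1} \leq |z| \leq 4a^{-1}\}$, where the cut-off derivatives cost factors $b^{-k}$ on a region of scale $b$. Second, the discrepancy between $g_{a,b}$ and the model metrics $g_2$ or $a^2b^2 g_N$ on $\operatorname{supp}\widetilde\chi_i$, which by the expansions in Section \ref{app metric} is $O(|u|^2) = O(b^2)$ on the $M$-side and $O(|z|^{-1}) = O(ab)$ on the $N$-side. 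Using the pointwise bounds $|v_N| \lesssim \|f\|(ab\rho_1)^\delta$ and $|v_M| \lesssim \|f\|\rho_2^\delta$ for the building blocks (which in the neck give factors $\sim b^\delta$ on both sides), together with the target weight $w^{\delta-4} \sim b^{\delta-4}$, every term of $Ef$ reduces to an estimate of the form $b^{\kappa(\delta)}\|f\|_{C^{0,\alpha}_{\delta-4}}$ with $\kappa(\delta) > 0$, thanks to the assumption $4-n < \delta < 0$.

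The main difficulty will be to carefully balance the competing powers of $a$ and $b$ appearing across all error contributions so that the operator norm of $E$ genuinely tends to zero; the coupling $a = b^4$ is chosen precisely to make each resulting exponent positive, uniformly in $\delta \in (4-n, 0)$. Once $\|E\| \leq \tfrac{1}{2}$ for $b$ small enough, $I + E$ is invertible on $C^{0,\alpha}_{\delta-4}(\widetilde M)$ by Neumann series with norm at most $2$, so $\mathcal{G}(I+E)^{-1}$ is a right inverse of $L_{g_{a,b}}$ with a uniform $(a,b)$-independent bound. Since $\widetilde M$ is compact and $L_{g_{a,b}}$ is a classical fourth-order elliptic operator, it is Fredholm of index zero on the Hölder scale, so the existence of a bounded right inverse automatically upgrades to bijectivity, completing the proof.
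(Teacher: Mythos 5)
Your overall plan — patch $L^{-1}_{a^2b^2g_N}$ and $L^{-1}_{g_2}$ together with cut-offs and then use a Neumann series to upgrade the approximate right inverse to an exact one, invoking Fredholm index zero for bijectivity — is exactly the strategy the paper uses. However, there is one essential ingredient missing, and without it the key estimate fails.

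You assert that the commutator terms $[L_{g_{a,b}},\widetilde\chi_i]$, with cut-offs whose derivatives ``cost factors $b^{-k}$ on a region of scale $b$,'' can be absorbed into an error of size $b^{\kappa(\delta)}\|f\|$ with $\kappa(\delta)>0$. This is false for cut-offs transitioning over a range of scale $b$. Track the worst term: $\nabla^4\widetilde\chi_i\cdot v$ with $v=L_{*}^{-1}(\chi_i f)$. On the neck the weight is $w\sim b$, so $|\nabla^4\widetilde\chi_i|\sim b^{-4}$ and $|v|\lesssim w^\delta\|f\|\sim b^\delta\|f\|$. The weighted $C^0_{\delta-4}$ norm of the commutator term is then $\sup w^{4-\delta}\cdot b^{-4}\cdot b^\delta\|f\|\sim b^{4-\delta}\cdot b^{\delta-4}\|f\|=O(1)\|f\|$, \emph{not} $o(1)\|f\|$. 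The exponents cancel exactly, for any $\delta$: this is a scaling obstruction that the coupling $a=b^4$ does nothing to resolve. It is precisely the standard difficulty in gluing arguments for geometric PDE. The paper overcomes it by the \emph{logarithmic cut-off trick}: the outer cut-offs $\beta_1,\beta_2$ are taken to transition over a logarithmically long range (e.g.\ $\beta_1(z)=\theta_1((|u|/4b)^\lambda)$ with $\lambda=O(1/\log b)$, so $\nabla\beta_1$ is supported in roughly $B_{\sqrt{b}}\setminus B_{4b}$), which yields $|\nabla^k\beta_i|\lesssim \lambda\, |u|^{-k}$ to leading order and hence a commutator error of size $O(\lambda)=o(1)$ as $b\to0$. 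That extra factor $\lambda\to 0$, not any power of $b$, is what kills the commutator. You cannot take $\widetilde\chi_i$ to cut off at scale $b$ and still run the Neumann series.

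Two secondary points. First, your requirement that $\operatorname{supp}\widetilde\chi_i$ lie inside the region where $g_{a,b}$ coincides exactly with the respective model metric is incompatible with $\widetilde\chi_i\chi_i=\chi_i$: since $\chi_N+\chi_M\equiv1$, the set $\operatorname{supp}\chi_N$ necessarily meets the interpolation annulus $\{b\le|u|\le4b\}$, so $\widetilde\chi_N$ must equal $1$ there, where $g_{a,b}$ is neither $g_2$ nor $a^2b^2g_N$. The paper deals with this by letting $\beta_1$ extend outward into the $M$ region and $\beta_2$ inward into the $N$ region, and then bounds $\beta_i(L_{g_{a,b}}-L_{\text{model}})$ separately (Lemma~\ref{L1}); you will need an analogous comparison estimate. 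Second, once the logarithmic cut-off is in place, your remaining discussion of the metric discrepancy being $O(b^2)$ on the $M$-side and $O(ab)$ on the $N$-side, controlled with $a=b^4$, is essentially correct and matches the $O(b^2)$ error obtained in the paper's Lemma~\ref{L1}.
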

\begin{proof}
The proof follows the argument from \cite{DK90} for gluing solutions of linear equations by constructing an approximate inverse operator $H_0$ for the linearized operator $L_{g_{a,b}}$.

Recall $\theta_1$ and $\theta_2$ are the cut-off functions defined in Section \ref{app metric}. Denote
\begin{equation}
\gamma_1(z) := \theta_1\left(\frac{|u|}{b}\right), \gamma_2(u) := \theta_2\left(\frac{|u|}{b}\right)
\end{equation}
We also need to define the new cut-off functions $\beta_1$ and $\beta_2$ as
\begin{equation}
\beta_1(z)=\theta_1\left(\left(\frac{|u|}{4b}\right)^{\lambda}\right), \beta_2(u) = \theta_2\left(4\left(\frac{|u|}{b}\right)^{\lambda}\right).
\end{equation}
where $\lambda>0$ is a small constant to be chosen. In fact, we can choose $\lambda = O(\frac{1}{\log b})$ such that $\beta_1=1$ on the support of $\gamma_1$, $\nabla\beta_1$ support in a region slightly smaller than $B_{\sqrt{b}}\setminus B_{4b}$ and $\beta_1=0$ outside $B_{\sqrt{b}}$. Similarly, with the chosen $\lambda$, we can make $\beta_2=1$ on the support of $\gamma_2$, $\nabla\beta_2$ support in a region slightly smaller than $B_{b}\setminus B_{b^2}$ and $\beta_2=0$ in $B_{b^2}$. Thus, we have $\gamma_1 + \gamma_2 = 1$, $\beta_1\gamma_1 = \gamma_1$ and $\beta_2\gamma_2=\gamma_2$.

With all the cut-off functions defined above, one can construct an approximate inverse operator $H_0$ for the linearized operator $L_{g_{a,b}}$. By Proposition \ref{p4} and Proposition \ref{p5}, one can define
\begin{equation}
H_0 :=\beta_1(a^4b^4H_1)\gamma_1 + \beta_2H_2\gamma_2,
\end{equation}
where $H_1$, $H_2$ are the inverse operators of $L_{g_{N}}$ and $L_{g_2}$ respectively. In this case, $a^4b^4H_1$ would be the inverse operator of $L_{a^2b^2g_{N}}$.
We then have
\begin{align}\label{eqL}
L_{g_{a,b}}(H_0) &= L_{g_{a,b}}(a^4b^4\beta_1H_1\gamma_1) + L_{g_{a,b}}(\beta_2H_2\gamma_2)\\
&\notag
=\beta_1L_{g_{a,b}}(a^4b^4H_1\gamma_1) + \beta_2L_{g_{a,b}}(H_2\gamma_2) + [L_{g_{a,b}},\beta_1]a^4b^4H_1\gamma_1\\
&\notag
+[L_{g_{a,b}},\beta_2]H_2\gamma_2.
\end{align}
First, we need the following lemma.

\begin{lemma}\label{L1}
Suppose $4-n<\delta<0$, $a=b^4$ and $b$ is small enough. Then
\begin{equation}
||L_{g_{a,b}}f - L_{g_2}f||_{C^{0,\alpha}_{\delta-4}(\widetilde{M})} = o(1)||f||_{C^{4,\alpha}_{\delta}(\widetilde{M})}
\end{equation}
if $f$ lives in the support of $\beta_2$ and
\begin{equation}
||L_{g_{a,b}}f - L_{a^2b^2g_{N}}f||_{C^{0,\alpha}_{\delta-4}(\widetilde{M})} = o(1)||f||_{C^{4,\alpha}_{\delta}(\widetilde{M})}
\end{equation}
if $f$ lives in the support of $\beta_1$, where $o(1)$ means it is a constant which goes to zero as $b\rightarrow 0$.
\end{lemma}
\begin{proof}
Recall that the approximate metric $g_{a,b}$ is defined by
\begin{equation}
g_{a,b}= \begin{cases}g_2 & |u|\geq4b \\ |du|^2 + [\theta_1(\frac{|u|}{b})\tilde{\eta_1}(u) + \theta_2(\frac{|u|}{b})\eta_2(u)] & b\leq|u|\leq4b \\ a^2b^2g_{N} & |z|\leq a^{-1} \end{cases}.
\end{equation}
In the annular region $A=\{b\leq|u|\leq4b\}$, $g_{a,b}$ looks like a metric in the form of $g+h$, where $h$ is a (0, 2)-tensor. So, we would like to calculate the curvatures of $g+h$ in terms of  curvatures of $g$ and derivatives of $h$. Before proving the lemma, let\rq{}s review some of the basic calculations.

For Christoffel symbols, we have
\begin{equation}
\Gamma(g+h)^{k}_{ij} = \Gamma(g)^{k}_{ij} + \frac{1}{2}(g+h)^{km}\{\nabla_{j}h_{im}+ \nabla_{i}h_{jm}- \nabla_{m}h_{ij}\}.
\end{equation}
Thus,
\begin{equation}
\nabla_{g+h}T = \nabla_{g}T + (g+h)^{-1}\ast\nabla_{g}h\ast T,
\end{equation}
where $T$ is any tensor field.
By the formula for $(1,3)$-curvature tensor in terms of the Christoffel symbols, we have
\begin{equation}
Rm_{g+h} = Rm_{g} + (g+h)^{-1}\ast\nabla^2h + (g+h)^{-2}\ast\nabla h\ast\nabla h,
\end{equation}
\begin{equation}
\ric_{g+h} = \ric_{g} + \delta_{ik}(g+h)^{-1}\ast\nabla^2h + \delta_{ik}(g+h)^{-2}\ast\nabla h\ast\nabla h,
\end{equation}
and
\begin{equation}
R_{g+h} = R_{g} + (g^{-2}\ast h +\sum_{k\geq2}g^{-k-1}\ast h^{k})\ast\ric_{g} + \delta_{ik}(g+h)^{-2}\ast\nabla^2h +\delta_{ik} (g+h)^{-3}\ast\nabla h\ast\nabla h.
\end{equation}
Thus,
\begin{align}
\Delta_{g+h}R_{g+h} &= (g+h)^{-1}\ast \nabla^2_{g+h}R_{g+h} \\
&\notag
=(g+h)^{-1}(\partial^2R_{g+h} -\Gamma_{g+h}\partial R_{g+h})  \\
&\notag
=(g+h)^{-1}\partial^2R_{g+h}- (g+h)^{-1}(\Gamma(g)^{k}_{ij} \\
&\notag
+ \frac{1}{2}(g+h)^{km}\{\nabla_{j}h_{im}+ \nabla_{i}h_{jm}- \nabla_{m}h_{ij}\})\partial R_{g+h}.
\end{align}
In $A = \{b\leq|u|\leq 4b\}$, $g_{a,b}= \delta_{ij}+\theta_1(\frac{|u|}{b})\tilde{\eta_1}_{ij}(u) + \theta_2(\frac{|u|}{b})\eta_2(u)= \delta_{ij}+ \phi(u)$. If we choose $a=b^4$ and $b$ is small enough, then we have the following estimates for the derivatives of $\phi$.
\begin{align}
|\phi'|&=|b^{-1}\theta_1'\tilde{\eta_1} + \theta\tilde{\eta_1}' + b^{-1}\theta_2'\eta_2+ \theta_2\eta_2'|\\
&\notag
=O(b^{-1})O((ab|u|^{-1}) + O(ab|u|^{-2}) + O(b^{-1}|u|^2) + O(|u|) \\ 
&\notag
=O(ab^{-1}) + O(b)\\
&\notag
=O(b),
\end{align}
\begin{align}
|\phi''| &= |b^{-2}\theta_1''\tilde{\eta_1} + 2b^{-1}\theta_1'\tilde{\eta_1}' + \theta_1\tilde{\eta_1}'' + b^{-2}\theta_2''\eta_2 + 2b^{-1}\theta_2'\eta_2' + \theta_2\eta_2''|\\
&\notag
=O(b^{-2})O((ab|u|^{-1}) + O(b^{-1})O(ab|u|^{-2}) + (ab|u|^{-3}) + O(b^{-2}|u|^2) \\
&\notag
+ O(b^{-1})O(|u|) + O(1) \\
&\notag
=O(ab^{-2}) + O(1)\\
&\notag
=O(1),
\end{align}
\begin{equation}
|\phi'''| = O(ab^{-3})+O(b^{-1})=O(b^{-1}),
\end{equation}
and
\begin{equation}
|\phi''''| = O(ab^{-4})+O(b^{-2})=O(b^{-2}).
\end{equation}
With the above estimates, we can compute the curvatures of metric $g_{a,b}$ in the annular region $A$.
\begin{equation}
|R_{g_{a,b}}| = |\delta_{ik}(g_{a,b})^{-2}\ast\nabla^2\phi +\delta_{ik} (g_{a,b})^{-3}\ast\nabla\phi\ast\nabla\phi|= O(1),
\end{equation}
\begin{align}
|\partial R_{g_{a,b}}| &=| -2\delta_{ik}(g_{a,b})^{-3}\ast\nabla^2\phi\ast\nabla\phi +\delta_{ik}(g_{a,b})^{-2}\ast\nabla^3\phi \\
&\notag
+ 2\delta_{ik} (g_{a,b})^{-3}\ast\nabla^2\phi\ast\nabla\phi -3\delta_{ik}(g_{a,b})^{-4}\ast\nabla\phi\ast\nabla\phi\ast\nabla\phi| \\
&\notag
=O(b^{-1}),
\end{align}
and
\begin{equation}
|\Delta_{g_{a,b}}R_{g_{a,b}}|=O(b^{-2}).
\end{equation}
Recall that
\begin{align}
Q^{n}_{g}&= -\Delta_{g}\sigma_1(g^{-1}A_{g}) + \frac{n-4}{2}\sigma_1^2(g^{-1}A_{g}) + 4\sigma_2(A_{g})\\
&\notag
=-\frac{1}{2(n-1)}\Delta_{g}R_{g} + a_{n}R^2_{g} - \frac{2}{(n-2)^2}|\ric_{g}|^2,
\end{align}
for some dimensional constant $a_{n}$. Thus, one can compute the $Q$-curvature for different regions on the connected sum $\widetilde{M}$.
\begin{equation}
|Q^{n}_{g_{a,b}}| = \begin{cases} \nu & |u|\geq4b \\ O(b^{-2}) & A=\{b\leq|u|\leq4b\} \\ 0 & |z|\leq a^{-1} \end{cases},
\end{equation}
\begin{equation}
|Q^{n}_{g_{a,b}}-\nu| = \begin{cases} 0 & |u|\geq4b \\ O(b^{-2}) & A=\{b\leq|u|\leq4b\} \\ \nu &|z|\leq a^{-1} \end{cases},
\end{equation}
and
\begin{equation}
||Q^{n}_{g_{a,b}}-\nu||_{C^{0,\alpha}_{\delta-4}(\widetilde{M})} = O(b^{2-\delta}).
\end{equation}
We can proceed to compare $L_{g_{a,b}}$ and $L_{g_2}$.
In the support of $\beta_2$, $g_{a,b}=g_2+ h$, where $|h| =|\theta_1(\frac{|u|}{b})(\tilde{\eta_1}(u) - \eta_2(u))|=O(b^2)$. Thus, $|\nabla_{g_2}h| = O(b)$, $|\nabla^2_{g_2}h|=O(1)$ and $|\nabla^3_{g_2}h|=O(b^{-1})$.  Recall the formula of the Paneitz operator from equation (\ref{eqP}). We can also express the Paneitz operator in terms of the scalar curvature $R_{g}$ and Ricci curvature $\ric_{g}$. That is,
\begin{equation}
P_{g}^{n}u = (-\Delta_{g})^2 u - div_{g}\left(b_{n}R_{g}g - \frac{4}{(n-2)} \ric_{g}\right) du + \frac{(n-4)}{2}Q_{g}^{n}u,
\end{equation}
for some dimensional constant $b_{n}$. Thus, we have
\begin{align}\label{eqc1}
&||L_{g_{a,b}}f - L_{g_2}f||_{C^{0,\alpha}_{\delta-4}(\widetilde{M})}\\
&\notag
\leq ||\Delta^2_{g_{a,b}}f - \Delta^2_{g_2}f|| +C||g_{a,b}^{-1}\nabla_{g_{a,b}}R_{g_{a,b}}\nabla_{g_{a,b}}f - g_2^{-1}\nabla_{g_2}R_{g_2}\nabla_{g_2}f||\\
&\notag
+C||R_{g_{a,b}}\Delta_{g_{a,b}}f-R_{g_2}\Delta_{g_2}f||+C ||g_{a,b}^{-1}\nabla_{g_{a,b}}\ric_{g_{a,b}}\nabla_{g_{a,b}}f - g_2^{-1}\nabla_{g_2}\ric_{g_2}\nabla_{g_2}f ||\\
&\notag
+C||g_{a,b}^{-1}g_{a,b}^{-1}\ric_{g_{a,b}}\nabla_{g_{a,b}}^2f-g_2^{-1}g_2^{-1}\ric_{g_2}\nabla_{g_2}^2f || + C||(Q^{n}_{g_{a,b}}-Q^{n}_{g_2})f||.
\end{align}
Let's estimate term by term. First, we have the following calculations:
\begin{align}
\Delta_{g+h}f&= (g+h)^{-1}\nabla_{g+h}\nabla f \\
&\notag
=  (g+h)^{-1}[\nabla^2f + (g+h)^{-1}\ast\nabla_{g}h\ast\nabla f] \\
&\notag
= (g^{-1}+ g^{-2}\ast h +\sum_{k\geq2}g^{-k-1}\ast h^{k})[\nabla^2f + (g+h)^{-1}\ast\nabla_{g}h\ast\nabla f] \\
&\notag
= \Delta_{g}f + g^{-1}(g+h)^{-1}\ast\nabla_{g}h\ast\nabla f + (g^{-2}\ast h +\sum_{k\geq2}g^{-k-1}\ast h^{k})*\\
&\notag
[\nabla^2f + (g+h)^{-1}\ast\nabla_{g}h\ast\nabla f]
\end{align}
and
\begin{align}
\Delta^2_{g+h}f&= \Delta_{g}(\Delta_{g+h}f) + g^{-1}(g+h)^{-1}\ast\nabla_{g}h\ast\nabla(\Delta_{g+h}f) \\
&\notag
+ (g^{-2}\ast h +\sum_{k\geq2}g^{-k-1}\ast h^{k})[\nabla^2(\Delta_{g+h}f) + (g+h)^{-1}\ast\nabla_{g}h\ast\nabla(\Delta_{g+h}f)]\\
&\notag
=\Delta^2_{g}f + \Delta_{g}(g^{-1}(g+h)^{-1}\ast\nabla_{g}h\ast\nabla f) \\
&\notag
+ \Delta_{g}[(g^{-2}\ast h +\sum_{k\geq2}g^{-k-1}\ast h^{k})(\nabla^2f +(g+h)^{-1}\ast\nabla_{g}h\ast\nabla f)] \\
&\notag
+ g^{-1}(g+h)^{-1}\ast\nabla_{g}h\ast[\nabla(\Delta_{g}f) + \nabla(g^{-1}(g+h)^{-1}\ast\nabla_{g}h\ast\nabla f) \\
&\notag
+ \nabla((g^{-2}\ast h +\sum_{k\geq2}g^{-k-1}\ast h^{k})(\nabla^2f + (g+h)^{-1}\ast\nabla_{g}h\ast\nabla f))] \\
&\notag
+ (g^{-2}\ast h +\sum_{k\geq2}g^{-k-1}\ast h^{k})\{\nabla^2(\Delta f) + \nabla^2( g^{-1}(g+h)^{-1}\ast\nabla_{g}h\ast\nabla f) \\
&\notag
+ \nabla^2 [ (g^{-2}\ast h +\sum_{k\geq2}g^{-k-1}\ast h^{k})(\nabla^2f + (g+h)^{-1}\ast\nabla_{g}h\ast\nabla f)] \\
&\notag
+ (g+h)^{-1}\ast\nabla_{g}h[\nabla(\Delta f) + \nabla( g^{-1}(g+h)^{-1}\ast\nabla_{g}h\ast\nabla f)) \\
&\notag
+ \nabla((g^{-2}\ast h +\sum_{k\geq2}g^{-k-1}\ast h^{k})(\nabla^2f +(g+h)^{-1}\ast\nabla_{g}h\ast\nabla f))]\}\\
&\notag
= \Delta^2_{g}f + O(b^3)\nabla f + O(b^{-1})\nabla f + O(b)\Delta(\nabla f) + O(b^2)\Delta f + O(b)\nabla f \\
&\notag
+ [O(1)+\sum_{k\geq2}O(b^{2k-2})](\nabla^2f + O(b)\nabla f) \\
&\notag
+ [O(b^2)+\sum_{k\geq2}O(b^{2k})](\Delta^2f + O(b^3)\nabla f+O(b^{-1})\nabla f + O(b)\Delta(\nabla f) \\
&\notag
+O(b^2)\Delta f + O(b)\nabla f)\\
&\notag
+ [O(b)+\sum_{k\geq2}O(b^{2k-1})](\nabla^3f + O(b^2)\nabla f+O(1)\nabla f+O(b)\nabla^2f)\\
&\notag
+(O(b)\nabla(\Delta f)+ [O(b^2)+\sum_{k\geq2}O(b^{2k})](\nabla^2f+O(b)\nabla f).
\end{align}
Thus we have
\begin{equation}
|\Delta^2_{g_{a,b}}f - \Delta^2_{g_2}f|\leq O(b^{-1})|\nabla f| + O(b)|\Delta_{g_2}(\nabla f)| + O(1)|\nabla_{g_2}^2f|+O(b^2)|\Delta^2_{g_2}f| + l.o.t.
\end{equation}
For the second block of the right hand side of equation (\ref{eqc1}), we have
\begin{align}
&(g+h)^{-1}\nabla_{g+h}R_{g+h}\nabla_{g+h}f \\
&\notag
= (g+h)^{-1}[\nabla_{g}(R_{g} + (g^{-2}\ast h +\sum_{k\geq2}g^{-k-1}\ast h^{k})\ast\ric_{g} + \delta_{ik}(g+h)^{-2}\ast\nabla^2h \\
&\notag
+\delta_{ik} (g+h)^{-3}\ast\nabla h\ast\nabla h)]\nabla f\\
&\notag
=(g^{-1}+ g^{-2}\ast h +\sum_{k\geq2}g^{-k-1}\ast h^{k})[\nabla_{g}R_{g} + g^{-2}\ast\nabla h\ast\ric_{g}+g^{-2}\ast h\ast\nabla\ric_{g}\\
&\notag
+k\sum_{k\geq2}g^{-k-1}\ast h^{k-1}\ast\nabla h\ast\ric_{g}+ \sum_{k\geq2}g^{-k-1}\ast h^{k}\ast\nabla\ric_{g}+(g+h)^{-3}\ast\nabla h\ast\nabla^2h\\
&\notag
+ (g+h)^{-2}\ast\nabla^3_{g}h + (g+h)^{-4}\ast(\nabla h)^3 + 2(g+h)^{-3}\ast\nabla^2h\ast\nabla h]\nabla f.
\end{align}
Then,
\begin{align}
&|g_{a,b}^{-1}\nabla_{g_{a,b}}R_{g_{a,b}}\nabla_{g_{a,b}}f-g_2^{-1}\nabla_{g_2}R_{g_2}\nabla_{g_2}f|\\
&\notag
\leq \big[O(b)|\ric_{g_2}|+O(b^2)|\nabla_{g_2}\ric_{g_2}|+O(b)+O(b^{-1}) +O(b^3)\\
&\notag
+O(b^2)|\nabla_{g_2}R_{g_2}|\big]|\nabla_{g_2}f|+l.o.t.
\end{align}
For the third block of the right hand side of equation (\ref{eqc1}), we have
\begin{align}
R_{g+h}\Delta_{g+h}f &= R_{g+h}\{\Delta_{g}f + g^{-1}(g+h)^{-1}\ast\nabla_{g}h\ast\nabla f \\
&\notag
+ (g^{-2}\ast h +\sum_{k\geq2}g^{-k-1}\ast h^{k})[\nabla^2f + (g+h)^{-1}\ast\nabla_{g}h\ast\nabla f]\}\\
&\notag
=R_{g}\Delta_{g}f +R_{g} \{g^{-1}(g+h)^{-1}\ast\nabla_{g}h\ast\nabla f \\
&\notag
+ (g^{-2}\ast h +\sum_{k\geq2}g^{-k-1}\ast h^{k})[\nabla^2f + (g+h)^{-1}\ast\nabla_{g}h\ast\nabla f]\}\\
&\notag
+\{(g^{-2}\ast h +\sum_{k\geq2}g^{-k-1}\ast h^{k})\ast\ric_{g} + \delta_{ik}(g+h)^{-2}\ast\nabla^2h \\
&\notag
+\delta_{ik} (g+h)^{-3}\ast\nabla h\ast\nabla h\}\{\Delta_{g}f + g^{-1}(g+h)^{-1}\ast\nabla_{g}h\ast\nabla f \\
&\notag
+ (g^{-2}\ast h +\sum_{k\geq2}g^{-k-1}\ast h^{k})[\nabla^2f + (g+h)^{-1}\ast\nabla_{g}h\ast\nabla f]\}.
\end{align}
Thus,
\begin{align}
|R_{g_{a,b}}\Delta_{g_{a,b}}f-R_{g_2}\Delta_{g_2}f| &\leq O(b)|R_{g_2}||\nabla_{g_2}f| + O(b^2)|R_{g_2}||\nabla_{g_2}^2f| + O(1)|\Delta_{g_2}f| \\
&\notag
+O(b)|\nabla_{g_2}f|+O(b^2)|\nabla_{g_2}^2f|+l.o.t..
\end{align}
For the fourth block of the right hand side of equation (\ref{eqc1}), we calculate
\begin{align}
(g+h)^{-1}\nabla_{g+h}\ric_{g+h}\nabla_{g+h}f &= (g^{-1}+g^{-2}\ast h +\sum_{k\geq2}g^{-k-1}\ast h^{k})\{\nabla_{g}\ric_{g}\\
&\notag
+\nabla_{g}(\delta_{ik}(g+h)^{-1}\ast\nabla^2h + \delta_{ik}(g+h)^{-2}\ast\nabla h\ast\nabla h)\\
&\notag
+ (g+h)^{-1}\ast\nabla_{g}h\ast(\ric_{g}+\delta_{ik}(g+h)^{-1}\ast\nabla^2h \\
&\notag
+ \delta_{ik}(g+h)^{-2}\ast\nabla h\ast\nabla h)\}\nabla f.
\end{align}
This implies
\begin{align}
&|(g_{a,b})^{-1}\nabla_{g_{a,b}}\ric_{g_{a,b}}\nabla_{g_{a,b}}f - g_2^{-1}\nabla_{g_2}\ric_{g_2}\nabla_{g_2}f |\\
&\notag
 \leq O(b^{-1})|\nabla f|+ O(b)|\ric_{g_2}||\nabla f|+ O(b^2)|\nabla_{g_2}\ric_{g_2}||\nabla f| +l.o.t..
\end{align}
For second to last part of the right hand side of equation (\ref{eqc1}), we have
\begin{align}
&(g+h)^{-1}(g+h)^{-1}\ric_{g+h}\nabla_{g+h}^2f \\
&\notag
= (g^{-1}+g^{-2}\ast h +\sum_{k\geq2}g^{-k-1}\ast h^{k})^2\ast\\
&\notag
(\ric_{g}+\delta_{ik}(g+h)^{-1}\ast\nabla^2h + \delta_{ik}(g+h)^{-2}\ast\nabla h\ast\nabla h)(\nabla_{g}^2f +(g+h)^{-1}\ast\nabla_{g}h\ast\nabla f).
\end{align}
Then,
\begin{align}
&|g_{a,b}^{-1}g_{a,b}^{-1}\ric_{g_{a,b}}\nabla_{g_{a,b}}^2f-g_2^{-1}g_2^{-1}\ric_{g_2}\nabla_{g_2}^2f | \\
&\notag
\leq O(b)|\ric_{g_2}||\nabla f| + O(b^2)|\ric_{g_2}||\nabla^2f|+O(1)|\nabla^2f|+O(b)|\nabla f|+ l.o.t.
\end{align}
Suppose $f$ lives in the support of $\beta_2$. Note that in $\text{supp}(\beta_2)$, the weight function $w=|u|=\rho_2$. Combine all the estimates we calculate above, we can estimate the $C^{0}_{\delta-4}(\widetilde{M})$ norm of $L_{g_{a,b}}f - L_{g_2}f$ first, and then we deal with the H\"older part of the weighted norm.
\begin{align}\label{esC0}
&||L_{g_{a,b}}f - L_{g_2}f||_{C^{0}_{\delta-4}(\widetilde{M})}\\
&\notag
= \sup_{\widetilde{M}}|\rho_2^{4-\delta}(L_{g_{a,b}}f - L_{g_2}f)|\\
&\notag
\leq C\sup_{\widetilde{M}}\left( O(b^{-1})|\rho_2^{4-\delta}\nabla f |+O(b)|\rho_2^{4-\delta}\Delta_{g_2}(\nabla_{g_2} f)| +O(1)|\rho_2^{4-\delta}\nabla^2_{g_2}f| \right.\\
&\notag
+O(b^2)|\rho_2^{4-\delta}\Delta_{g_2}^2f| + O(b)|\rho_2^{4-\delta}\ric_{g_2}\nabla_{g_2}f| +O(b^2)|\rho_2^{4-\delta}\nabla_{g_2}\ric_{g_2}\nabla_{g_2}f | \\
&\notag
 +O(b^2)|\rho_2^{4-\delta}\nabla_{g_2}R_{g_2}\nabla_{g_2}f| +O(b)|\rho_2^{4-\delta}R_{g_2}\nabla_{g_2}f| + O(b^2)|\rho_2^{4-\delta}R_{g_2}\nabla_{g_2}^2f| \\
&\notag
\left.+ O(1)|\rho_2^{4-\delta}\Delta_{g_2}f|+ O(b^2)|\rho_2^{4-\delta}\ric_{g_2}\nabla^2f| + O(b^{-2})|\rho_2^{4-\delta}f|\right) \\
&\notag
\leq O(b^2)||f||_{C^{4,\alpha}_{\delta}(\widetilde{M})}+ O(b^4)||f||_{C^{4,\alpha}_{\delta}(\widetilde{M})}
\end{align}
 Now, we deal with the H\"older part in the following.
\begin{align}\label{estH0}
&|L_{g_{a,b}}f - L_{g_2}f|_{C^{0,\alpha}_{\delta-4}(\widetilde{M})}\\
&\notag
\leq C \{\sup_{\widetilde{M}}\left(\rho_2^{4-\delta+\alpha}\sup_{0<4d(x,y)<\rho_2(x)}O(b^2)\frac{|\Delta^2f(x)-\Delta^2f(y)|}{|x-y|^{\alpha}}\right)\\
&\notag
+\sup_{\widetilde{M}}\left(\rho_2^{4-\delta+\alpha}\sup_{0<4d(x,y)<\rho_2(x)}O(b)\frac{|\nabla^3f(x)-\nabla^3f(y)|}{|x-y|^{\alpha}}\right)\\
&\notag
+\sup_{\widetilde{M}}\left(\rho_2^{4-\delta+\alpha}\sup_{0<4d(x,y)<\rho_2(x)}O(1)\frac{|\nabla^2f(x)-\nabla^2f(y)|}{|x-y|^{\alpha}}\right)\\
&\notag
+\sup_{\widetilde{M}}\left(\rho_2^{4-\delta+\alpha}\sup_{0<4d(x,y)<\rho_2(x)}O(b^{-1})\frac{|\nabla f(x)-\nabla f(y)|}{|x-y|^{\alpha}}\right)\\
&\notag
+\sup_{\widetilde{M}}\left(\rho_2^{4-\delta+\alpha}\sup_{0<4d(x,y)<\rho_2(x)}O(b^{-2})\frac{|f(x)-f(y)|}{|x-y|^{\alpha}}\right)\}
\end{align}
All the H\"older estimates are similar. Here, we only pick two terms for demonstration. For example, we have
\begin{align}\label{estH1}
&\sup_{\widetilde{M}}\left(\rho_2^{4-\delta+\alpha}\sup_{0<4d(x,y)<\rho_2(x)}O(b^2)\frac{|\Delta^2f(x)-\Delta^2f(y)|}{|x-y|^{\alpha}}\right)\\
&\notag
\leq O(b^2)|\Delta^2f|_{C^{0,\alpha}_{\delta-4}(\widetilde{M})}\\
&\notag
\leq O(b^2) ||f||_{C^{4,\alpha}_{\delta}(\widetilde{M})}.
\end{align}
and
\begin{align}\label{estH2}
&\sup_{\widetilde{M}}\left(\rho_2^{4-\delta+\alpha}\sup_{0<4d(x,y)<\rho_2(x)}O(b^{-2})\frac{|f(x)-f(y)|}{|x-y|^{\alpha}}\right)\\
&\notag
\leq \sup_{\widetilde{M}}\left(\rho_2^{4-\delta+\alpha}\sup_{0<4d(x,y)<\rho_2(x)}O(b^{-2})\frac{|\nabla f||x-y|}{|x-y|^{\alpha}}\right)\\
&\notag
\leq \sup_{\widetilde{M}}\left(\rho_2^{4-\delta+\alpha}\sup_{0<4d(x,y)<\rho_2(x)}O(b^{-2})|\nabla f||x-y|^{1-\alpha}\right)\\
&\notag
\leq \sup_{\widetilde{M}}\left(\rho_2^{4-\delta+\alpha}\sup_{0<4d(x,y)<\rho_2(x)}O(b^{-2})|\nabla f|||x|+|y||^{1-\alpha}\right)\\
&\notag
\leq C (\sup_{\widetilde{M}}|\rho_2^4O(b^{-2})|)(\sup_{\widetilde{M}}|\rho_2^{1-\delta}|\nabla f||)(\sup_{\widetilde{M}}|\rho_2^{\alpha-1}|x|^{1-\alpha}|)\\
&\notag
\leq O(b^2) ||f||_{C^{4,\alpha}_{\delta}(\widetilde{M})}.
\end{align}
Since the calculation is direct but lengthy, we omit the rest of H\"older estimates. By equation (\ref{esC0}) and above H\"older estimates, we conclude
\begin{equation}
||L_{g_{a,b}}f - L_{g_2}f||_{C^{0,\alpha}_{\delta-4}(\widetilde{M})} = O(b^2) ||f||_{C^{4,\alpha}_{\delta}(\widetilde{M})}= o(1)||f||_{C^{4,\alpha}_{\delta}(\widetilde{M})}
\end{equation}
where $o(1)$ means it is a constant which goes to zero as $b\rightarrow 0$.

Now, let's prove the second part of the lemma. In the support of $\beta_1$, $g_{a,b}=a^2b^2g_{N}+\tilde{h}$, where $|\tilde{h}| =|\theta_2(\frac{|u|}{b})(\eta_2(u)-\tilde{\eta_1}(u) )| =O(b^2)$. Thus, $|\nabla\tilde{h}| = O(b)$, $|\nabla^2\tilde{h}|=O(1)$ and $|\nabla^3\tilde{h}|=O(b^{-1})$, where $\nabla$ is the connection with respect to the metric $a^2b^2g_{N}$. Suppose $f$ lives in the support of $\beta_1$. We then have the similar $C^{0}_{\delta-4}(\widetilde{M})$ estimate as above.
\begin{align}
&||L_{g_{a,b}}f - L_{a^2b^2g_{N}}f||_{C^{0}_{\delta-4}(\widetilde{M})}\\
&\notag
\leq ||\Delta^2_{g_{a,b}}f - \Delta^2_{a^2b^2g_{N}}f|| \\
&\notag
+C\{||g_{a,b}^{-1}\nabla_{g_{a,b}}R_{g_{a,b}}\nabla_{g_{a,b}}f- (a^2b^2g_{N})^{-1}\nabla_{a^2b^2g_{N}}R_{a^2b^2g_{N}}\nabla_{a^2b^2g_{N}}f||\\
&\notag
+||R_{g_{a,b}}\Delta_{g_{a,b}}f-R_{a^2b^2g_{N}}\Delta_{a^2b^2g_{N}}f|| \\
&\notag
+||g_{a,b}^{-1}\nabla_{g_{a,b}}\ric_{g_{a,b}}\nabla_{g_{a,b}}f - (a^2b^2g_{N})^{-1}\nabla_{a^2b^2g_{N}}\ric_{a^2b^2g_{N}}\nabla_{a^2b^2g_{N}}f ||\\
&\notag
+||g_{a,b}^{-1}g_{a,b}^{-1}\ric_{g_{a,b}}\nabla_{g_{a,b}}^2f-(a^2b^2g_{N})^{-1}(a^2b^2g_{N})^{-1}\ric_{a^2b^2g_{N}}\nabla_{a^2b^2g_{N}}^2f || \\
&\notag
+ ||(Q^{n}_{g_{a,b}}-Q^{n}_{a^2b^2g_{N}})f||\}\\
&\notag
\leq C\sup_{\widetilde{M}}( O(b^{-1})|w^{4-\delta}\nabla f |+O(b)|w^{4-\delta}\Delta(\nabla f)| + O(1)|w^{4-\delta}\nabla^2f|+O(b^2)|w^{4-\delta}\Delta^2f| \\
&\notag
+ O(b)|w^{4-\delta}\ric_{a^2b^2g_{N}}\nabla_{a^2b^2g_{N}}f| +O(b^2)|w^{4-\delta}\nabla_{a^2b^2g_{N}}\ric_{a^2b^2g_{N}}\nabla_{a^2b^2g_{N}}f|\\
&\notag
+O(b^2)|w^{4-\delta}\nabla_{a^2b^2g_{N}}R_{a^2b^2g_{N}}\nabla_{a^2b^2g_{N}}f| +O(b)|w^{4-\delta}R_{a^2b^2g_{N}}\nabla_{a^2b^2g_{N}}f| \\
&\notag
+ O(b^2)|w^{4-\delta}R_{a^2b^2g_{N}}\nabla_{a^2b^2g_{N}}^2f|+ O(b^2)|w^{4-\delta}\ric_{a^2b^2g_{N}}\nabla^2f| +O(b^{-2})|w^{4-\delta}f|) \\
&\notag
\leq (ab)^{4-\delta}\sup_{\widetilde{M}}\{O(b^{-1})a^{-3}(ab)^{-1}|\rho_1^{1-\delta}\nabla_{g_{N}}f| + O(b)a^{-1}(ab)^{-3}|\rho_1^{3-\delta}\Delta_{g_{N}}\nabla f| \\
&\notag
+ a^{-2}(ab)^{-2}|\rho_1^{2-\delta}\nabla^2_{g_{N}}f| +O(b^2)(ab)^{-4}|\rho_1^{4-\delta}\Delta^2_{g_{N}}f| + O(b^3)a^{-3}(ab)^{-1}|\rho_1^{1-\delta}\nabla_{g_{N}}f| \\
&\notag
+ O(b^4)a^{-2}(ab)^{-2}|\rho_1^{2-\delta}\nabla^2_{g_{N}}f| +O(b^{-2})a^{-4}|\rho_1^{-\delta}f| \}\\
&\notag
\leq(ab)^{-\delta}(O(b^2)+O(b^6))|| f ||_{C^{4,\alpha}_{\delta, g_{N}}(N_{p})}\\
&\notag
= (ab)^{-\delta}(O(b^2)+O(b^6)) (ab)^{\delta}|| f ||_{C^{4,\alpha}_{\delta}(\widetilde{M})}\\
&\notag
= (O(b^2) + O(b^6))||f||_{C^{4,\alpha}_{\delta}(\widetilde{M})}
\end{align}
The H\"older parts of the weighted norm are estimated by an argument similar to equations (\ref{estH0}), (\ref{estH1}) and (\ref{estH2}).  Thus we conclude that
\begin{equation}
||L_{g_{a,b}}f - L_{a^2b^2g_{N}}f||_{C^{0,\alpha}_{\delta-4}(\widetilde{M})}= O(b^2)||f||_{C^{4,\alpha}_{\delta}(\widetilde{M})} = o(1)||f||_{C^{4,\alpha}_{\delta}(\widetilde{M})},
\end{equation}
where o(1) means it is a constant which goes to zero as $b\rightarrow 0$. We then complete the proof of the lemma.
\end{proof}

With Lemma \ref{L1}, we only need to estimate commutator terms $[L_{a^2b^2g_{N}}, \beta_1]$ and $[L_{g_2}, \beta_2]$.
\begin{align}\label{eqcm1}
&[L_{a^2b^2g_{N}},\beta_1]f \\
&\notag
= (L_{a^2b^2g_{N}}\beta_1 - \beta_1L_{a^2b^2g_{N}}) f \\
&\notag
= (a^2b^2g_{N})^{-1}\nabla\beta_1\nabla(\Delta f) + \Delta\beta_1\Delta f + (a^2b^2g_{N})^{-1}\nabla(\Delta\beta_1)\nabla f + f\Delta^2\beta_1 \\
&\notag
- div_{g_{a^2b^2g_{N}}}\left[\left(b_{n}R_{a^2b^2g_{N}}(a^2b^2g_{N})- \frac{4}{n-2}\ric_{a^2b^2g_{N}}\right)(d\beta_1)f\right],
\end{align}
where $\nabla$ and $\Delta$ are the connection and Laplacian with respect to the metric $a^2b^2g_{N}$ respectively and $b_{n}$ is a dimensional constant.

On the support of $\nabla\beta_1$, since $a^2b^2g_{N}$ is close to the flat metric $(du)^2$, we have the following estimates:
\begin{equation}\label{esb1}
|\nabla\beta_1| =\left| \theta_1\rq{}\lambda\left(\frac{|u|}{4b}\right)^{\lambda-1}(4b)^{-1}\right|= O(\lambda b^{-1}),
\end{equation}
\begin{align}
|\nabla^2\beta_1|
&\notag
=\left| \theta_1\rq{}\rq{}\lambda^2\left(\frac{|u|}{4b}\right)^{2\lambda-2}(4b)^{-2}+\theta_1\rq{}\lambda(\lambda-1)\left(\frac{|u|}{4b}\right)^{\lambda-2}(4b)^{-2}\right| \\
&\notag
= O(\lambda b^{-2})+ O(\lambda^2b^{-2}),
\end{align}
\begin{equation}
|\nabla^3\beta_1| = O(\lambda^3b^{-3}) + O(\lambda^2b^{-3})+ O(\lambda b^{-3}),
\notag
\end{equation}
and
\begin{equation}
|\Delta^2\beta_1| = O(\lambda^4b^{-4}) + O(\lambda^3b^{-4})+ O(\lambda^2b^{-4})+ O(\lambda b^{-4}).
\notag
\end{equation}
The function $\gamma_1\phi$ can be thought of as a function on $N_{p}$. Since $L_{g_N}$ has uniformly controlled inverse $H_1$ and $||\gamma_1\phi||_{C^{0,\alpha}_{\delta-4,g_{N}}(N_{p})}\leq C(ab)^{\delta-4}||\phi||_{C^{0,\alpha}_{\delta-4}(\widetilde{M})}$, we have
\begin{align}
&||a^4b^4H_1\gamma_1\phi||_{C^{4,\alpha}_{\delta}(\widetilde{M})}\\
&\notag
= a^4b^4(ab)^{-\delta}||H_1\gamma_1\phi||_{C^{4,\alpha}_{\delta, g_{N}}(N_{p})}\\
&\notag
\leq Ca^4b^4(ab)^{-\delta}||\gamma_1\phi||_{C^{0,\alpha}_{\delta-4,g_{N}}(N_{p})}\\
&\notag
\leq Ca^4b^4(ab)^{-\delta}(ab)^{\delta-4}||\phi||_{C^{0,\alpha}_{\delta-4}(\widetilde{M})}\\
&\notag
= C||\phi||_{C^{0,\alpha}_{\delta-4}(\widetilde{M})},
\end{align}
where $C$ is a uniform constant independent of $a$ and $b$.

For convenience, denote $f = a^4b^4H_1\gamma_1\phi$. By equation (\ref{eqcm1}) and estimates from (\ref{esb1}), we have
\begin{align}
&||[L_{a^2b^2g_{N}},\beta_1]a^4b^4H_1\gamma_1\phi||_{C^{0}_{\delta-4}(\widetilde{M})}\\
&\notag
\leq \sup_{\widetilde{M}}(O(\lambda b^{-1})|w^{4-\delta}\nabla(\Delta f)| + O(\lambda b^{-2})|w^{4-\delta}\Delta f| + O(\lambda b^{-3})|w^{4-\delta}\nabla f| \\
&\notag
+ O(\lambda b^{-4})|w^{4-\delta}f| + |w^{4-\delta}R_{a^2b^2g_{N}}\nabla((d\beta_1)f)| + |w^{4-\delta}\nabla R_{a^2b^2g_{N}}(d\beta_1)f| \\
&\notag
+ |w^{4-\delta}\ric_{a^2b^2g_{N}}\nabla((d\beta_1)f)| + |w^{4-\delta}\nabla\ric_{a^2b^2g_{N}}(d\beta_1)f| )\\
&\notag
\leq (ab)^{4-\delta}\sup_{\widetilde{M}}\{O(\lambda b^{-1})a^{-1}(ab)^{-3}|\rho_1^{3-\delta}\nabla_{g_{N}}(\Delta_{g_{N}}f)| + O(\lambda b^{-2})a^{-2}(ab)^{-2}|\rho_1^{2-\delta}\Delta_{g_{N}}f| \\
&\notag
+ O(\lambda b^{-3})a^{-3}(ab)^{-1}|\rho_1^{1-\delta}\nabla_{g_{N}}f| +O(\lambda b^{-4})a^{-4}|\rho_1^{-\delta}f| + O(\lambda)a^{-4}|\rho_1^{-\delta}f| \\
&\notag
+ O(\lambda b)a^{-3}(ab)^{-1}|\rho_1^{1-\delta}\nabla_{g_{N}} f| \} \\
&\notag
= (ab)^{-\delta}(O(\lambda)+O(\lambda b^4))||f||_{C^{4,\alpha}_{\delta, g_{N}}(N_{p})}\\
&\notag
= (ab)^{-\delta}(O(\lambda)+O(\lambda b^4)) (ab)^{\delta}||f||_{C^{4,\alpha}_{\delta}(\widetilde{M})}\\
&\notag
= (O(\lambda)+O(\lambda b^4)) ||f||_{C^{4,\alpha}_{\delta}(\widetilde{M})}\\
&\notag
\leq C(O(\lambda)+O(\lambda b^4))||\phi||_{C^{0,\alpha}_{\delta-4}(\widetilde{M})}\\
&\notag
=o(1)||\phi||_{C^{0,\alpha}_{\delta-4}(\widetilde{M})},
\end{align}
where $\lambda = O(\frac{1}{\log b})$ as we mentioned earlier. For the H\"older part, we have the following estimates.
\begin{align}\label{estH3}
&|[L_{a^2b^2g_{N}},\beta_1]a^4b^4H_1\gamma_1\phi|_{C^{0,\alpha}_{\delta-4}(\widetilde{M})}\\
&\notag
\leq |O(\lambda b^{-1})\nabla^3f|_{C^{0,\alpha}_{\delta-4}(\widetilde{M})} + |O(\lambda b^{-2})\nabla^2f|_{C^{0,\alpha}_{\delta-4}(\widetilde{M})} + |O(\lambda b^{-3})\nabla f|_{C^{0,\alpha}_{\delta-4}(\widetilde{M})}\\
&\notag
+ |O(\lambda b^{-4})f|_{C^{0,\alpha}_{\delta-4}(\widetilde{M})} + |O(\lambda)f|_{C^{0,\alpha}_{\delta-4}(\widetilde{M})} + |O(\lambda b)\nabla f|_{C^{0,\alpha}_{\delta-4}(\widetilde{M})}
\end{align}
For $|O(\lambda b^{-1})\nabla^3f|_{C^{0,\alpha}_{\delta-4}(\widetilde{M})}$ term, we have the Ho\"lder estimate like
\begin{align}\label{estH4}
&\sup_{\widetilde{M}}\left(w^{4-\delta+\alpha}\sup_{0<4d(x,y)<w(x)}O(\lambda b^{-1})\frac{|\nabla^3f(x)-\nabla^3f(y)|}{d(x,y)^{\alpha}}\right)\\
&\notag
\leq C\sup_{\widetilde{M}}\left(w^{4-\delta+\alpha}\sup_{0<4d(x,y)<w(x)}O(\lambda b^{-1})\frac{|\nabla^4f||x-y|}{|x-y|^{\alpha}}\right)\\
&\notag
\leq \sup_{\widetilde{M}}\left((ab\rho_1)^{4-\delta+\alpha}\sup_{0<4d(z,\zeta)<\rho_1(z)}O(\lambda b^{-1})O(a^{-3-\alpha}b^{-3-\alpha})|\nabla^4_{g_{N}} f||z-\zeta|^{1-\alpha}\right)\\
&\notag
\leq (ab)^{-\delta}\sup_{\widetilde{M}}\left(O(ab)\rho_1^{4-\delta+\alpha}\sup_{0<4d(z,\zeta)<\rho_1(z)}O(\lambda b^{-1})|\nabla^4_{g_{N}} f|||z|+|\zeta||^{1-\alpha}\right)\\
&\notag
\leq C(ab)^{-\delta}O(\lambda)(\sup_{\widetilde{M}}|\rho_1^{4-\delta}|\nabla^4_{g_{N}}f||)(O(a)\sup_{\widetilde{M}}|\rho_1^{\alpha}|z|^{1-\alpha}|)\\
&\notag
\leq (ab)^{-\delta}O(\lambda) ||f||_{C^{4,\alpha}_{\delta,g_{N}}(N_{p})}\\
&\notag
= (ab)^{-\delta}O(\lambda) (ab)^{\delta}||f||_{C^{4,\alpha}_{\delta}(\widetilde{M})}\\
&\notag
\leq O(\lambda)||\phi||_{C^{0,\alpha}_{\delta-4}(\widetilde{M})}.
\end{align}
The rest of the terms in equation (\ref{estH3}) are estimated similarly as the $|O(\lambda b^{-1})\nabla^3f|$ term. Thus we know
\begin{equation}
||[L_{a^2b^2g_{N}},\beta_1]a^4b^4H_1\gamma_1\phi||_{C^{0,\alpha}_{\delta-4}(\widetilde{M})}= O(\lambda)||\phi||_{C^{0,\alpha}_{\delta-4}(\widetilde{M})}= o(1)||\phi||_{C^{0,\alpha}_{\delta-4}(\widetilde{M})}.
\end{equation}

Similarly, we have
\begin{align}
[L_{g_2},\beta_2]f &= g^{-1}_2\nabla\beta_2\nabla(\Delta f) + \Delta\beta_2\Delta f + g^{-1}_2\nabla(\Delta\beta_2)\nabla f+ f\Delta^2\beta_2 \\
&\notag
- div_{g_2}\left[\left(b_{n}R_{g_2}g_2- \frac{4}{n-2}\ric_{g_2}\right)(d\beta_2)f\right],
\end{align}
where $\nabla$ and $\Delta$ are the connection and Laplacian with respect to the metric $g_2$ respectively and $b_{n}$ is a dimensional constant. On the support of $\beta_2$, we also have similar estimates.
\begin{equation}
|\nabla\beta_2| = \left|\theta_2\rq{}\lambda\left(\frac{4|u|}{b}\right)^{\lambda-1}4(b)^{-1}\right|= O(\lambda b^{-1}),
\end{equation}
\begin{align}
|\nabla^2\beta_2|
&\notag
= \left|\theta_2\rq{}\rq{}\lambda^2\left(\frac{4|u|}{b}\right)^{2\lambda-2}4^2(b)^{-2}+\theta_2\rq{}\lambda(\lambda-1)\left(\frac{4|u|}{b}\right)^{\lambda-2}4^2(b)^{-2}\right| \\
&\notag
= O(\lambda b^{-2})+ O(\lambda^2b^{-2}),
\end{align}
\begin{equation}
|\nabla^3\beta_2| = O(\lambda^3b^{-3}) + O(\lambda^2b^{-3})+ O(\lambda b^{-3}),
\notag
\end{equation}
and
\begin{equation}
|\Delta^2\beta_2| = O(\lambda^4b^{-4}) + O(\lambda^3b^{-4})+ O(\lambda^2b^{-4})+ O(\lambda b^{-4}).
\notag
\end{equation}
We can also think of $\gamma_2\phi$ as a function on $M_{q}$. Since we have uniformly controlled inverse $H_2$ of $L_{g_2}$ and $g_{a,b}$ is close to $g_2$, we then have
\begin{equation}
\notag
||H_2\gamma_2\phi||_{C^{4,\alpha}_{\delta,g_2}(M_{q})}\leq C||\gamma_2\phi||_{C^{0,\alpha}_{\delta-4,g_2}(M_{q})}\leq C||\phi||_{C^{0,\alpha}_{\delta-4}(\widetilde{M})}.
\end{equation}
Denote $f = H_2\gamma_2\phi$. Note that the weight function $w=|u|=\rho_2$ in the support of $\gamma_2$. Thus, we estimate the $C^{0}_{\delta-4}(\widetilde{M})$ norm in the following.
\begin{align}
&||[L_{g_2},\beta_2]H_2\gamma_2\phi||_{C^{0}_{\delta-4}(\widetilde{M})}\\
&\notag
\leq \sup_{\widetilde{M}}(O(\lambda b^{-1})|\rho^{4-\delta}_2\nabla(\Delta f)| + O(\lambda b^{-2})|\rho^{4-\delta}_2\Delta f| + O(\lambda b^{-3})|\rho^{4-\delta}_2\nabla f| \\
&\notag
+ O(\lambda b^{-4})|\rho^{4-\delta}_2f| + |\rho^{4-\delta}_2R_{g_2}\nabla((d\beta_2)f)| + |\rho^{4-\delta}_2\nabla R_{g_2}(d\beta_2)f| \\
&\notag
+ |\rho^{4-\delta}_2\ric_{g_2}\nabla((d\beta_2)f)| + |\rho^{4-\delta}_2\nabla\ric_{g_2}(d\beta_2)f| )\\
&\notag
\leq\sup_{\widetilde{M}}(O(\lambda)|\rho^{3-\delta}_2\nabla(\Delta f)| + O(\lambda)|\rho^{2-\delta}_2\Delta f| + O(\lambda)|\rho^{1-\delta}_2\nabla f| \\
&\notag
+O(\lambda)|\rho^{-\delta}_2f| + O(\lambda b^2)|R_{g_2}\rho^{-\delta}_2f| + O(\lambda b^2)|R_{g_2}\rho^{1-\delta}_2\nabla f| \\
&\notag
+ O(\lambda b^3)|\nabla R_{g_2}\rho^{-\delta}_2f| + O(\lambda b^2)|\ric_{g_2}\rho^{-\delta}_2f| + O(\lambda b^2)|\ric_{g_2}\rho^{1-\delta}_2\nabla f| \\
&\notag
+ O(\lambda b^3)|\nabla\ric_{g_2}\rho^{-\delta}_2f| )\\
&\notag
\leq (O(\lambda)+O(\lambda b^2))||f||_{C^{4,\alpha}_{\delta,g_2}(M_{q})}\\
&\notag
\leq (O(\lambda)+O(\lambda b^2))||\phi||_{C^{0,\alpha}_{\delta-4}(\widetilde{M})}
\end{align}
To estimate the H\"older part of the weighted norm, we do the similar estimates as equations (\ref{estH3}) and (\ref{estH4}). We then conclude that
\begin{equation}
||[L_{g_2},\beta_2]H_2\gamma_2\phi||_{C^{0,\alpha}_{\delta-4}(\widetilde{M})}= O(\lambda)||\phi||_{C^{0,\alpha}_{\delta-4}(\widetilde{M})}=o(1)||\phi||_{C^{0,\alpha}_{\delta-4}(\widetilde{M})}.
\end{equation}
Now we can continue to prove Theorem \ref{Thm2}. By applying Lemma \ref{L1} and above estimates, we have
\begin{align}
L_{g_{a,b}}(H_0) &= L_{g_{a,b}}(a^4b^4\beta_1H_1\gamma_1) + L_{g_{a,b}}(\beta_2H_2\gamma_2)\\
&\notag
=\beta_1L_{g_{a,b}}(a^4b^4H_1\gamma_1) + \beta_2L_{g_{a,b}}(H_2\gamma_2) \\
&\notag
+ [L_{g_{a,b}},\beta_1]a^4b^4H_1\gamma_1+[L_{g_{a,b}},\beta_2]H_2\gamma_2\\
&\notag
=\beta_1(L_{a^2b^2g_{N}}a^4b^4H_1)\gamma_1 +o(1)+ \beta_2(L_{g_2}H_2)\gamma_2 + o(1) +o(1)\\
&\notag
=\beta_1\gamma_1 +\beta_2\gamma_2 + o(1)\\
&\notag
=\gamma_1 +\gamma_2 +o(1)\\
&\notag
=1+R,
\end{align}
where $||R||\leq\frac{1}{2}$ provided $a=b^4$ and $b$ small enough. Thus we know that $H=H_0(1+R)^{-1}$ is the controlled inverse of $L_{g_{a,b}}$ and $||H||\leq 2$.
\end{proof}

\subsection{Proof of Theorem \ref{Thm1}.}\label{IFT}
After we showed the invertibility of the linearized operator $L_{g_{a,b}}$, we can continue to prove Theorem \ref{Thm1}.

Recall the equation we want to solve is
\begin{equation}
\notag
\mathbf{N}_{g_{a,b}}[1+\phi]:=(1+\phi)^{-\frac{n+4}{n-4}}P^{n}_{g_{a,b}}[1+\phi]-\frac{n-4}{2}\nu=0.
\end{equation}
Then we can rewrite the equation as
\begin{equation}
\mathbf{N}_{g_{a,b}}[1+\phi]=\mathbf{N}_{g_{a,b}}[1]+ L_{g_{a,b}}[\phi]+ \mathbf{q}_{g_{a,b}}[\phi],
\end{equation}
where $\mathbf{N}_{g_{a,b}}[1]= \frac{n-4}{2}Q^{n}_{g_{a,b}}- \frac{n-4}{2}\nu$.
Thus we have the quadratic term
\begin{equation}
\mathbf{q}_{g_{a,b}}[\phi]:= \mathbf{N}_{g_{a,b}}[1+\phi] - \mathbf{N}_{g_{a,b}}[1]-(P^{n}_{g_{a,b}}[\phi]-\frac{n+4}{2}Q^{n}_{g_{a,b}}\phi)
\end{equation}
and the following estimate.

\begin{lemma}\label{lemma2}
Suppose $||\phi||_{C^{4,\alpha}_0(\widetilde{M})}$, $||\psi||_{C^{4,\alpha}_0(\widetilde{M})}\leq\epsilon$, where $\epsilon>0$ is a sufficiently small constant. Then
\begin{equation}
||\mathbf{q}_{g_{a,b}}[\psi] - \mathbf{q}_{g_{a,b}}[\phi]||_{C^{0,\alpha}_{\delta-4}(\widetilde{M})}\leq C(||\phi||_{C^{4,\alpha}_0(\widetilde{M})}+||\psi||_{C^{4,\alpha}_0(\widetilde{M})})||\psi -\phi||_{C^{4,\alpha}_{\delta}(\widetilde{M})}
\end{equation}
\end{lemma}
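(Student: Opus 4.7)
The plan is to first decompose $\mathbf{q}_{g_{a,b}}[\phi]$ into explicit algebraic pieces that isolate the nonlinear $\phi$-dependence from the Paneitz operator acting on $\phi$. Using $P^n_{g_{a,b}}[1] = \frac{n-4}{2}Q^n_{g_{a,b}}$, a direct calculation gives
\begin{equation*}
\mathbf{q}_{g_{a,b}}[\phi] \;=\; A(\phi)\,P^n_{g_{a,b}}[\phi] \;+\; B(\phi)\,Q^n_{g_{a,b}},
\end{equation*}
where $A(\phi) = (1+\phi)^{-\frac{n+4}{n-4}}-1$ and $B(\phi) = \frac{n-4}{2}A(\phi)+\frac{n+4}{2}\phi$. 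Both $A$ and $B$ are smooth for $|\phi|$ small; $A(0)=0$, $B(0)=0$, and by construction of the linearization $B'(0)=0$, so $B(\phi)=O(\phi^{2})$ near $0$.

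Next I would expand the difference with an intermediate term,
\begin{equation*}
\mathbf{q}_{g_{a,b}}[\psi]-\mathbf{q}_{g_{a,b}}[\phi] \;=\; [A(\psi){-}A(\phi)]\,P^n_{g_{a,b}}[\psi] \,+\, A(\phi)\,P^n_{g_{a,b}}[\psi{-}\phi] \,+\, [B(\psi){-}B(\phi)]\,Q^n_{g_{a,b}},
\end{equation*}
and apply mean value inequalities on the interval $|\phi|,|\psi|\le\epsilon$ where $A,B$ are smooth to obtain $|A(\psi)-A(\phi)|\le C|\psi-\phi|$, $|A(\phi)|\le C|\phi|$, and, crucially because $B'(0)=0$, $|B(\psi)-B(\phi)|\le C(|\phi|+|\psi|)|\psi-\phi|$. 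The weighted H\"older norms on $\widetilde M$ then give $|\phi|,|\psi|\le C\|\cdot\|_{C^{4,\alpha}_{0}}$ pointwise, $|\psi-\phi|(x)\le Cw(x)^{\delta}\|\psi-\phi\|_{C^{4,\alpha}_{\delta}}$, and the elliptic bounds $\|P^n_{g_{a,b}}[\psi]\|_{C^0_{-4}}\le C\|\psi\|_{C^{4,\alpha}_{0}}$ and $\|P^n_{g_{a,b}}[\psi-\phi]\|_{C^0_{\delta-4}}\le C\|\psi-\phi\|_{C^{4,\alpha}_{\delta}}$. The curvature calculations already carried out for Lemma~\ref{L1} show $|Q^n_{g_{a,b}}|\le Cw^{-2}$ uniformly, the bound being saturated in the annular gluing region where $|Q^n_{g_{a,b}}|=O(b^{-2})$ and $w\sim b$. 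Multiplying via the weighted product estimate $\|fg\|_{C^0_{\mu+\nu}}\le\|f\|_{C^0_{\mu}}\|g\|_{C^0_{\nu}}$ and using $w\le 1$ globally so that $w^{\delta-2}\le w^{\delta-4}$ in the last term, each of the three summands is controlled in $C^0_{\delta-4}(\widetilde M)$ by $C(\|\phi\|_{C^{4,\alpha}_{0}}+\|\psi\|_{C^{4,\alpha}_{0}})\|\psi-\phi\|_{C^{4,\alpha}_{\delta}}$.

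The H\"older seminorm part of the norm is handled by the weighted Leibniz inequality $|fg|_{C^{0,\alpha}_{\mu+\nu}}\le|f|_{C^{0,\alpha}_{\mu}}\|g\|_{C^{0}_{\nu}} + \|f\|_{C^{0}_{\mu}}|g|_{C^{0,\alpha}_{\nu}}$ together with chain-rule estimates derived from $A(\psi)-A(\phi)=(\psi-\phi)\int_{0}^{1}A'(\phi+t(\psi-\phi))\,dt$ and its analogue for $B$: boundedness of $A'$ on $[-\epsilon,\epsilon]$ yields $|A(\psi)-A(\phi)|_{C^{0,\alpha}_{\delta}}\le C|\psi-\phi|_{C^{0,\alpha}_{\delta}}$, while Lipschitzness of $B'$ with $B'(0)=0$ yields $|B(\psi)-B(\phi)|_{C^{0,\alpha}_{\delta}}\le C(\|\phi\|+\|\psi\|)|\psi-\phi|_{C^{0,\alpha}_{\delta}}$. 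The main obstacle I expect is the bookkeeping through the gluing annulus, where $Q^n_{g_{a,b}}$ blows up like $b^{-2}$ and the coefficients of $P^n_{g_{a,b}}$ pick up negative powers of $b$; it is precisely the second-order vanishing of $B$ at the origin that absorbs the $w^{-2}$ growth of $Q^n_{g_{a,b}}$ into the extra $(\|\phi\|+\|\psi\|)$ factor, making the estimate uniform in the gluing parameters $a,b$. Without this structural cancellation the $B(\phi)Q^n_{g_{a,b}}$ contribution would overflow the target weighted space.
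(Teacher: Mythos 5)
Your proposal is correct and follows essentially the same path as the paper's proof: the same implicit decomposition $\mathbf{q}_{g_{a,b}}[\phi]=A(\phi)P^n_{g_{a,b}}[\phi]+B(\phi)Q^n_{g_{a,b}}$ (the paper normalizes $B$ by a factor $\frac{n-4}{2}$), the same three-term splitting of the difference, and the same exploitation of the quadratic vanishing $B(\phi)=O(\phi^2)$ to pair with the neck-region bound on $Q^n_{g_{a,b}}$ (the paper records this as $\|Q^n_{g_{a,b}}\|_{C^{0,\alpha}_{-4}}=O(1)$, via a Taylor expansion giving $|\psi^2-\phi^2|$). Your remark identifying $B'(0)=0$ as the structural reason the $Q$-term fits into $C^{0,\alpha}_{\delta-4}(\widetilde M)$ is precisely the point the paper uses, just stated more explicitly.
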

\begin{proof}
First, we estimate the $C^0_{\delta-4}(\widetilde{M})$ norm. We have
\begin{align}
&||\mathbf{q}_{g_{a,b}}[\phi] - \mathbf{q}_{g_{a,b}}[\psi]||_{C^{0}_{\delta-4}(\widetilde{M})}\\
&\notag
\leq C\sup_{\widetilde{M}}\left\{\left|w^{4-\delta}((1+\psi)^{-\frac{n+4}{n-4}}-1)P^{n}_{g_{a,b}}[\psi-\phi]| \right.\right.\\
&\notag
\left.\left. + |w^{4-\delta}P^{n}_{g_{a,b}}[\phi]((1+\psi)^{-\frac{n+4}{n-4}}- (1+\phi)^{-\frac{n+4}{n-4}})\right| \right.\\
&\notag
\left.+ \left|w^{4-\delta}Q^{n}_{g_{a,b}}\left[((1+\psi)^{-\frac{n+4}{n-4}}+\frac{n+4}{n-4}\psi- (1+\phi)^{-\frac{n+4}{n-4}}-\frac{n+4}{n-4}\phi\right]\right|\right\}\\
&\notag
\leq C\sup_{\widetilde{M}}\big\{\big|w^{4-\delta}(|\psi|+O(|\psi|^2))P^{n}_{g_{a,b}}[\psi-\phi]\big| \\
&\notag
+ \big|w^{4-\delta}P^{n}_{g_{a,b}}[\phi](|\psi-\phi|+O(|\psi^2-\phi^2|))\big| \\
&\notag
+ \big|w^{4-\delta}Q^{n}_{g_{a,b}}(|\psi^2-\phi^2|+O(|\psi^3-\phi^3|))\big|\big\} \\
&\notag
\leq C\sup_{\widetilde{M}}\big\{|\psi||w^{4-\delta}P^{n}_{g_{a,b}}[\psi-\phi]| + (w^{-\delta}|\psi-\phi||w^{4}P^{n}_{g_{a,b}}[\phi]|)\\
&\notag
+ |w^{4-\delta}|\psi+\phi||\psi-\phi|Q^{n}_{g_{a,b}}|\big\}\\
&\notag
\leq C\big\{|\psi|_{C^0}||\psi - \phi||_{C^{4,\alpha}_{\delta}(\widetilde{M})}+ |\phi|_{C^{4,\alpha}_0(\widetilde{M})}||\psi - \phi||_{C^{4,\alpha}_{\delta}(\widetilde{M})} \\
&\notag
+||Q_{g_{a,b}}||_{C^{0,\alpha}_{-4}(M)}(|\phi|_{C^0}+|\psi|_{C^0})||\psi - \phi||_{C^{4,\alpha}_{\delta}(\widetilde{M})}\big\}\\
&\notag
\leq C(|\phi|_{C^{4,\alpha}_0(\widetilde{M})}+|\psi|_{C^{4,\alpha}_0(\widetilde{M})})||\psi - \phi||_{C^{4,\alpha}_{\delta}(\widetilde{M})},
\end{align}
where $||Q^{n}_{g_{a,b}}||_{C^{0,\alpha}_{-4}(\widetilde{M})}=O(1)$. Now we deal with the H\"older part.
\begin{align}\label{estH5}
&|\mathbf{q}_{g_{a,b}}[\phi] - \mathbf{q}_{g_{a,b}}[\psi]|_{C^{0,\alpha}_{\delta-4}(\widetilde{M})} \\
&\notag
\leq |((1+\psi)^{-\frac{n+4}{n-4}}-1)P^{n}_{g_{a,b}}[\psi-\phi]|_{C^{0,\alpha}_{\delta-4}(\widetilde{M})} \\
&\notag
+ \left|P^{n}_{g_{a,b}}[\phi]((1+\psi)^{-\frac{n+4}{n-4}}- (1+\phi)^{-\frac{n+4}{n-4}})\right|_{C^{0,\alpha}_{\delta-4}(\widetilde{M})}\\
&\notag
+ \left|Q^{n}_{g_{a,b}}\left[((1+\psi)^{-\frac{n+4}{n-4}}+\frac{n+4}{n-4}\psi- (1+\phi)^{-\frac{n+4}{n-4}}-\frac{n+4}{n-4}\phi\right]\right|_{C^{0,\alpha}_{\delta-4}(\widetilde{M})}
\end{align}
The H\"older parts for each term in the equation (\ref{estH5}) are estimated as in equation (\ref{estH4}). For example,
\begin{align}
&|((1+\psi)^{-\frac{n+4}{n-4}}-1)P^{n}_{g_{a,b}}[\psi-\phi]|_{C^{0,\alpha}_{\delta-4}(\widetilde{M})}\\
&\notag
\leq C|\psi(x)|_{C^0}\left( \sup_{\widetilde{M}}w^{4-\delta+\alpha}\sup_{0<4d(x,y)< w(x)} \frac{|P^{n}_{g_{a,b}}[\psi-\phi](x) - P^{n}_{g_{a,b}}[\psi-\phi](y)|}{|x-y|^{\alpha}}\right)\\
&\notag
+\left( \sup_{\widetilde{M}}w^{4-\delta+\alpha}\sup_{0<4d(x,y)< w(x)} |P^{n}_{g_{a,b}}[\psi-\phi](y)| \frac{|(\psi(x)-\psi(y) +O(\psi^2(x) - \psi^2(y)))|}{|x-y|^{\alpha}}\right)\\
&\notag
\leq C\{|\psi|_{C^0}|P^{n}_{g_{a,b}}[\psi-\phi]|_{C^{0,\alpha}_{\delta-4}(\widetilde{M})} \\
&\notag
+ (\sup_{\widetilde{M}}w|\nabla\psi|) (\sup_{\widetilde{M}}w^{\alpha-1}\sup_{0<4d(x,y)< w(x)}\frac{|x-y|}{|x-y|^{\alpha}})(\sup_{\widetilde{M}}w^{4-\delta}|P^{n}_{g_{a,b}}[\psi-\phi](y)|)\}\\
&\notag
\leq C ||\psi||_{C^{4,\alpha}_0(\widetilde{M})}||\psi-\phi||_{C^{4,\alpha}_{\delta}(\widetilde{M})}
\end{align}
All the other terms are estimated in a similar way. Thus we proved
\begin{equation}
||\mathbf{q}_{g_{a,b}}[\psi] - \mathbf{q}_{g_{a,b}}[\phi]||_{C^{0,\alpha}_{\delta-4}(\widetilde{M})}\leq C(||\phi||_{C^{4,\alpha}_0(\widetilde{M})}+||\psi||_{C^{4,\alpha}_0(\widetilde{M})})||\psi -\phi||_{C^{4,\alpha}_{\delta}(\widetilde{M})}
\end{equation}
\end{proof}

With the inverse operator $L^{-1}_{g_{a,b}}$ obtained in Theorem \ref{Thm2}, we define a new operator $T_{g_{a,b}}: C^{4,\alpha}_{\delta}(\widetilde{M}) \rightarrow C^{4,\alpha}_{\delta}(\widetilde{M})$ by
\begin{equation}
T_{g_{a,b}}[\phi] := -L^{-1}_{g_{a,b}}\left[\left(\frac{n-4}{2}Q^{n}_{ g_{a,b}}- \frac{n-4}{2}\nu \right)+\mathbf{q}_{g_{a,b}}[\phi]\right].
\end{equation}
By direct calculation, it's clear that solving the equation (\ref{eqN})
$$\mathbf{N}_{g_{a,b}}[1+\phi]=\mathbf{N}_{g_{a,b}}[1]+ L_{g_{a,b}}[\phi]+ \mathbf{q}_{g_{a,b}}[\phi]= 0$$
is the same as showing $\phi$ is a fixed point of $T_{g_{a,b}}$. Furthermore, we want $1+\phi >0$. Thus, the proof of Theorem \ref{Thm1} follows by the following lemma.

\begin{lemma}\label{lemma3}
Suppose $4-n<\delta<0$ and $\delta$ is close enough to $0$. Then $T_{g_{a,b}}$ is a contraction map on the small ball $U:=\{\phi\in\ C^{4,\alpha}_{\delta, g_{a,b}}(\widetilde{M}) : ||\phi||_{C^{4,\alpha}_{\delta}(\widetilde{M})}<b^{1-5\delta}\}$ if $a=b^4$ and $b$ is small enough. In particular, $1+\phi >0$, which gives a constant $Q$-curvature metric in the conformal class of $g_{a,b}$.
\end{lemma}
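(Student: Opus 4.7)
My plan is to apply the Banach fixed point theorem to $T_{g_{a,b}}$ on the ball $U$. This reduces to checking two estimates: that $T_{g_{a,b}}(U)\subset U$, and that $T_{g_{a,b}}$ is a strict contraction on $U$. Both will follow by combining the uniform bound $\|L_{g_{a,b}}^{-1}\|\le C$ from Theorem~\ref{Thm2} with the quadratic estimate from Lemma~\ref{lemma2} and the bound $\|Q_{g_{a,b}}^n-\nu\|_{C^{0,\alpha}_{\delta-4}(\widetilde{M})}=O(b^{2-\delta})$ already established in the proof of Lemma~\ref{L1}. A key auxiliary ingredient is the elementary comparison of weighted norms
\[
\|\phi\|_{C^{4,\alpha}_0(\widetilde{M})}\le C(ab)^{\delta}\|\phi\|_{C^{4,\alpha}_{\delta}(\widetilde{M})}=Cb^{5\delta}\|\phi\|_{C^{4,\alpha}_{\delta}(\widetilde{M})},
\]
which follows from $ab\le w\le 1$, $\delta<0$, and $a=b^4$ (so that inserting the factor $w^{\delta}$ everywhere loses at most $(ab)^{\delta}$).

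For $\phi\in U$, I apply Lemma~\ref{lemma2} with $\psi=0$ and use the norm comparison to get
\[
\|\mathbf{q}_{g_{a,b}}[\phi]\|_{C^{0,\alpha}_{\delta-4}}\le C\|\phi\|_{C^{4,\alpha}_0}\|\phi\|_{C^{4,\alpha}_{\delta}}\le Cb^{5\delta}\cdot b^{2(1-5\delta)}=Cb^{2-5\delta};
\]
combined with the error bound for $Q^n_{g_{a,b}}-\nu$, Theorem~\ref{Thm2} then yields
\[
\|T_{g_{a,b}}[\phi]\|_{C^{4,\alpha}_{\delta}}\le C\bigl(b^{2-\delta}+b^{2-5\delta}\bigr)=Cb^{1-5\delta}\bigl(b^{1+4\delta}+b\bigr),
\]
which is less than $b^{1-5\delta}$ provided $\delta$ is close enough to $0$ that $1+4\delta>0$ and $b$ is sufficiently small. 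For the contraction property, given $\phi,\psi\in U$, Theorem~\ref{Thm2} together with Lemma~\ref{lemma2} and the norm comparison gives
\[
\|T_{g_{a,b}}[\phi]-T_{g_{a,b}}[\psi]\|_{C^{4,\alpha}_{\delta}}\le C\bigl(\|\phi\|_{C^{4,\alpha}_0}+\|\psi\|_{C^{4,\alpha}_0}\bigr)\|\phi-\psi\|_{C^{4,\alpha}_{\delta}}\le Cb\,\|\phi-\psi\|_{C^{4,\alpha}_{\delta}},
\]
which is a strict contraction for $b$ small. The smallness hypothesis in Lemma~\ref{lemma2} is also automatic, since $\|\phi\|_{C^{4,\alpha}_0}\le Cb$ on $U$.

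Banach's theorem then supplies a unique fixed point $\phi\in U$, which by construction solves $\mathbf{N}_{g_{a,b}}[1+\phi]=0$, and positivity is immediate from the pointwise estimate $|\phi(x)|\le w(x)^{\delta}\|\phi\|_{C^0_{\delta}}\le (ab)^{\delta}b^{1-5\delta}=b<1$. The main obstacle is the simultaneous balance of exponents in the radius $b^{1-5\delta}$: it must be small enough that $\|\phi\|_{C^{4,\alpha}_0}\le \epsilon$ and that the contraction constant stays below $1$, yet large enough to absorb the fixed error $\|Q^n_{g_{a,b}}-\nu\|_{C^{0,\alpha}_{\delta-4}}=O(b^{2-\delta})$. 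This forces $1+4\delta>0$, which is precisely the reason the statement requires $\delta$ to be close to $0$ (in fact any $-\tfrac14<\delta<0$ works). Once the fixed point is produced, standard elliptic regularity upgrades $\phi$ to a smooth function, giving the desired smooth constant $Q$-curvature metric $\tilde g=(1+\phi)^{4/(n-4)}g_{a,b}$ on $\widetilde{M}=N\#M$ and completing the proof of Theorem~\ref{Thm1}.
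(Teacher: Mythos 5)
Your proposal is correct and follows essentially the same route as the paper: verify self-mapping via the weighted-norm comparison $\|\phi\|_{C^{4,\alpha}_0}\le C(ab)^{\delta}\|\phi\|_{C^{4,\alpha}_{\delta}}=Cb\cdot b^{-(1-5\delta)}\|\phi\|_{C^{4,\alpha}_{\delta}}$, the error bound $\|Q^n_{g_{a,b}}-\nu\|_{C^{0,\alpha}_{\delta-4}}=O(b^{2-\delta})$, and the quadratic estimate of Lemma~\ref{lemma2}; verify contraction using the same lemma together with the uniform bound on $L_{g_{a,b}}^{-1}$; and close by the Banach fixed point theorem with $|\phi|_{C^0}=O(b)$ ensuring positivity. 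Your explicit identification of the exponent constraint $1+4\delta>0$ (i.e.\ $-\tfrac14<\delta<0$) makes precise what the paper leaves at ``provided $\delta$ is close to $0$.''
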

\begin{proof}
First, we show that $T_{g_{a,b}}$ maps $U$ into $U$. If $a=b^4$, $\phi\in U$ implies
$\sup_{\widetilde{M}}|w^{-\delta}\phi|\leq Cb^{1-5\delta}$. Since $\delta<0$, we have
\begin{equation}
\sup_{\widetilde{M}}|\phi|\leq C\sup_{\widetilde{M}}(b^{1-5\delta}w^{\delta})\leq Cb^{1-5\delta}(ab)^{\delta}=Cb.
\end{equation}
Similarly, we also have
\begin{equation}
\sup_{\widetilde{M}}w^{k}|\nabla^{k}\phi|\leq C\sup_{\widetilde{M}}(b^{1-5\delta}w^{\delta})\leq Cb^{1-5\delta}(ab)^{\delta}=Cb,
\end{equation}
for $1\leq k\leq4$, and
\begin{equation}
\sup_{\widetilde{M}}w^{4+\alpha}\sup_{0<4d(x,y)<w(x)}\frac{|\nabla^4\phi(x)-\nabla^4\phi(y)|}{d(x,y)^{\alpha}}\leq C\sup_{\widetilde{M}}(b^{1-5\delta}w^{\delta})\leq Cb^{1-5\delta}(ab)^{\delta}=Cb.
\end{equation}
Thus, we have $||\phi||_{C^{4,\alpha}_0(\widetilde{M})}=O(b)$. By Lemma \ref{lemma2} and the fact that we have controlled inverse operator $L^{-1}_{g_{a,b}}$, we have
\begin{align}
\big|\big|T_{g_{a,b}}[\phi]\big|\big|_{C^{4,\alpha}_{\delta}(\widetilde{M})}&=\left|\left|-L^{-1}_{g_{a,b}}\left[\left(\frac{n-4}{2}Q^{n}_{ g_{a,b}}- \frac{n-4}{2}\nu \right)+\mathbf{q}[\phi]\right]\right|\right|_{C^{4,\alpha}_{\delta}(\widetilde{M})}\\
&\notag
\leq C\left|\left|\left(\frac{n-4}{2}Q^{n}_{ g_{a,b}} - \frac{n-4}{2}\nu\right)+\mathbf{q}[\phi]\right|\right|_{C^{0,\alpha}_{\delta-4}(\widetilde{M})}\\
&\notag
\leq C\big|\big|\frac{n-4}{2}Q^{n}_{ g_{a,b}}-\frac{n-4}{2}\nu \big|\big|_{C^{0,\alpha}_{\delta-4}(\widetilde{M})}+C||\mathbf{q}[\phi]||_{C^{0,\alpha}_{\delta-4}(\widetilde{M})}\\
&\notag
\leq C\big|\big|Q^{n}_{ g_{a,b}}-\nu \big|\big|_{C^{0,\alpha}_{\delta-4}(\widetilde{M})} +||\phi||_{C^{4,\alpha}_0(\widetilde{M})}||\phi||_{C^{0,\alpha}_{\delta-4}(\widetilde{M})}\\
&\notag
\leq O(b^{2-\delta}) +O(b)O(b^{1-5\delta})\\
&\notag
\leq O(b^{1-5\delta}).
\end{align}
The last inequality holds provided $\delta$ is close to $0$.

Now, we proceed to show $T_{g_{a,b}}$ is a contraction map on $U$ with contraction constant $\frac{1}{2}$. Assume $\psi$, $\phi\in U$, we have $||\psi||_{C^{4,\alpha}_0(\widetilde{M})}$, $||\phi||_{C^{4,\alpha}_0(\widetilde{M})}\leq Cb$. By the estimate from Lemma \ref{lemma2} and the fact that we have an uniform bound for $L^{-1}_{g_{a,b}}$, we know
\begin{align}
||T_{g_{a,b}}[\phi] - T_{g_{a,b}}[\psi]||_{C^{4,\alpha}_{\delta}(\widetilde{M})}&= ||L^{-1}_{g_{a,b}}(\mathbf{q}_{g_{a,b}}[\psi]-\mathbf{q}_{g_{a,b}}[\phi])||_{C^{4,\alpha}_{\delta}(\widetilde{M})}\\
&\notag
\leq C||\mathbf{q}_{g_{a,b}}[\psi]-\mathbf{q}_{g_{a,b}}[\phi]||_{C^{0,\alpha}_{\delta-4}(\widetilde{M})}\\
&\notag
\leq C(||\psi||_{C^{4,\alpha}_0(\widetilde{M})} +||\phi||_{C^{4,\alpha}_0(\widetilde{M})})||\psi-\phi||_{C^{0,\alpha}_{\delta-4}(\widetilde{M})}\\
&\notag
\leq Cb||\psi-\phi||_{C^{0,\alpha}_{\delta-4}(\widetilde{M})}\\
&\notag
<\frac{1}{2}||\psi-\phi||_{C^{0,\alpha}_{\delta-4}(\widetilde{M})},
\end{align}
provided $b$ is small enough, $a=b^4$ and $\delta$ is chosen to be close to $0$.

Once we prove the map $T_{g_{a,b}}$ is a contraction map on the small ball $U$, we conclude that $T_{g_{a,b}}$ has a fixed point $\phi$ in $U$ by Banach Fixed Point Theorem. In particular, from the above argument, $\phi\in U$ implies $|\phi|_{C^0}=O(b)$. Thus, if we choose $b$ small enough, then $1+\phi >0$, which is the conformal factor. If we define the metric $\tilde{g}= (1+\phi)^{\frac{4}{n-4}}g_{a,b}$, $\tilde{g}$ has constant $Q$-curvature.
\end{proof}

\noindent
\textbf {Remark 4.}
As we mentioned in the Introduction, when $n=5$, we only know the Green's function of Paneitz operator has asymptotic expansion $G(p, x)= |x|^{4-n} + O(|x|^{4-n+\epsilon})$ as $|x|\rightarrow 0$, for some $\epsilon >0$. However, for any fixed $\epsilon>0$, if we let $a=b^{\frac{5}{\epsilon}}$ and $-\frac{\epsilon}{5}<\delta<0$, we can still prove Theorem \ref{Thm1} for $n=5$. However, when $n=5$, while Proposition \ref{p2}, \ref{p4}, \ref{p5} still hold, we are not aware of any good examples satisfying the assumption (i) in Theorem \ref{Thm1}.

\begin{bibdiv}
\begin{biblist}

\bib{Aub76}{article}{
   author={Aubin, Thierry},
   title={\'Equations diff\'erentielles non lin\'eaires et probl\`eme de
   Yamabe concernant la courbure scalaire},
   journal={J. Math. Pures Appl. (9)},
   volume={55},
   date={1976},
   number={3},
   pages={269--296},
   issn={0021-7824},
}
\bib{Bra87}{article}{
   author={Branson, Thomas P.},
   title={Group representations arising from Lorentz conformal geometry},
   journal={J. Funct. Anal.},
   volume={74},
   date={1987},
   number={2},
   pages={199--291},
   issn={0022-1236},
}
\bib{DHL00}{article}{
   author={Djadli, Zindine},
   author={Hebey, Emmanuel},
   author={Ledoux, Michel},
   title={Paneitz-type operators and applications},
   journal={Duke Math. J.},
   volume={104},
   date={2000},
   number={1},
   pages={129--169},
   issn={0012-7094},
}
\bib{DK90}{book}{
   author={Donaldson, S. K.},
   author={Kronheimer, P. B.},
   title={The geometry of four-manifolds},
   series={Oxford Mathematical Monographs},
   note={Oxford Science Publications},
   publisher={The Clarendon Press Oxford University Press},
   place={New York},
   date={1990},
   pages={x+440},
}
\bib{DM08}{article}{
   author={Djadli, Zindine},
   author={Malchiodi, Andrea},
   title={Existence of conformal metrics with constant $Q$-curvature},
   journal={Ann. of Math. (2)},
   volume={168},
   date={2008},
   number={3},
   pages={813--858},
   issn={0003-486X},
}
\bib{Joy03}{article}{
   author={Joyce, Dominic},
   title={Constant scalar curvature metrics on connected sums},
   journal={Int. J. Math. Math. Sci.},
   date={2003},
   number={7},
   pages={405--450},
   issn={0161-1712},
}

\bib{LP87}{article}{
   author={Lee, John M.},
   author={Parker, Thomas H.},
   title={The Yamabe problem},
   journal={Bull. Amer. Math. Soc. (N.S.)},
   volume={17},
   date={1987},
   number={1},
   pages={37--91},
   issn={0273-0979},
}
\bib{MPU95}{article}{
   author={Mazzeo, Rafe},
   author={Pollack, Daniel},
   author={Uhlenbeck, Karen},
   title={Connected sum constructions for constant scalar curvature metrics},
   journal={Topol. Methods Nonlinear Anal.},
   volume={6},
   date={1995},
   number={2},
   pages={207--233},
   issn={1230-3429},
}

\bib{Pan08}{article}{
   author={Paneitz, Stephen M.},
   title={A quartic conformally covariant differential operator for
   arbitrary pseudo-Riemannian manifolds (summary)},
   journal={SIGMA Symmetry Integrability Geom. Methods Appl.},
   volume={4},
   date={2008},
   pages={Paper 036, 3},
   issn={1815-0659},
}
\bib{QR06}{article}{
   author={Qing, Jie},
   author={Raske, David},
   title={Compactness for conformal metrics with constant $Q$ curvature on
   locally conformally flat manifolds},
   journal={Calc. Var. Partial Differential Equations},
   volume={26},
   date={2006},
   number={3},
   pages={343--356},
   issn={0944-2669},
}

\bib{Rob09}{misc}{
   author={Robert, Fr\'{e}d\'{e}ric},
   title={Existence et Asymptotiques Optimales des Fonctions de Green des Op\'{e}rateurs Elliptiques d'Ordre Deux},
   date={2009},
   note={Notes Personnelles},
   url={http://www.iecn.u-nancy.fr/~frobert/ConstrucGreen.pdf},
}

\bib{Sch84}{article}{
   author={Schoen, Richard},
   title={Conformal deformation of a Riemannian metric to constant scalar
   curvature},
   journal={J. Differential Geom.},
   volume={20},
   date={1984},
   number={2},
   pages={479--495},
   issn={0022-040X},
}

\bib{Tru68}{article}{
   author={Trudinger, Neil S.},
   title={Remarks concerning the conformal deformation of Riemannian
   structures on compact manifolds},
   journal={Ann. Scuola Norm. Sup. Pisa (3)},
   volume={22},
   date={1968},
   pages={265--274},
}
		
\bib{Yam60}{article}{
   author={Yamabe, Hidehiko},
   title={On a deformation of Riemannian structures on compact manifolds},
   journal={Osaka Math. J.},
   volume={12},
   date={1960},
   pages={21--37},
}
\end{biblist}
\end{bibdiv}
\end{document}